\newtheorem{thm}{Theorem}[section]
 \newtheorem{defi}[thm]{Definition}
 \newtheorem{exple}[thm]{Example}
  \newtheorem{prop}[thm]{Proposition}
  \newtheorem{lem}[thm]{Lemma}
   \newtheorem{rque}[thm]{Remark}
\newcommand{\SG}{\mathfrak{S}} 
\newcommand{\Vect}{\operatorname{Vect}}
\newcommand{\W}{\mathcal{W}}
\newcommandx{\WP}[1][1=p]{\{\W_1,\cdots,\W_{#1}\}} 
\renewcommand{\H}{\mathcal{H}}
\newcommandx{\HP}[1][1=p]{\{\H_1,\cdots,\H_{#1}\}} 
\newcommandx{\WiP}[2][1=i,2=p]{\{\W_{#1,1},\cdots,\W_{#1,#2}\}} 
\renewcommandx{\i}[2]{\llbracket #1,#2 \rrbracket}
\newcommand{\NAT}{\mathcal{N\!AT}} 
\newcommandx{\NATdk}[2][1=d,2=k]{\NAT_{#1,#2}} 
\newcommand{\n}{n} 
\renewcommand{\d}{d} 
\renewcommand{\k}{k} 
\newcommand{\B}{B} 
\newcommand{\M}{M} 
\renewcommand{\O}{O} 
\newcommand{\BT}{\mathcal{BT}} 
\newcommand{\N}{T} 
\newcommand{\NOT}{\mathcal{NOT}} 
\newcommand{\GFN}{\mathfrak{N}} 
\newcommand{\GFH}{\mathfrak{H}} 
\newcommand{\LGFN}{\mathfrak{G}} 
\newcommandx{\GFHdk}[2][1=d,2=k]{\mathfrak{H}_{#1,#2}} 
\newcommandx{\GFNdk}[2][1=d,2=k]{\mathfrak{N}_{#1,#2}} 
\newcommandx{\LGFNdk}[2][1=d,2=k]{\mathfrak{G}_{#1,#2}} 
\newcommand{\gs}{\wi_1 \times \dots \times \wi_d} 
\newcommand{\w}{w} 
\newcommand{\wi}{\w} 
\newcommand{\dir}{\pi} 
\newcommandx{\edir}[2][1=d,2=k]{\Pi_{#1,#2}} 
\newcommand{\V}{\mathcal{V}}
\newcommand{\LV}{\V_L} 
\newcommand{\RV}{\V_R} 
\newcommand{\inv}{\operatorname{Inv}} 
\newcommand{\imaj}{\operatorname{iMaj}} 
\newcommand{\std}{\operatorname{Std}} 
\newcommand{\DV}{\V_{\dir}} 
\newcommandx{\GFD}[1][1=d]{\mathfrak{D}_{#1}} 
\newcommandx{\DTp}[1][1=p]{\mathcal{DT}_{d,#1}} 
\newcommandx{\NATB}[1][1=\B]{\NAT\left(#1\right)} 
\newcommandx{\NATM}[1][1=\M]{\NAT\left(#1\right)} 
\newcommandx{\EL}{\mathcal{E}_L} 
\newcommandx{\ER}{\mathcal{E}_R} 
\newcommand{\E}{\mathcal{E}} 
\newcommand{\LO}{\mathcal{L}_0} 
\newcommand{\RO}{\mathcal{R}_0} 
\newcommandx{\nat}[2][1=d,2=k]{NAT$_{#1,#2}$\xspace}
\newcommand{\QQ}{\operatorname{\mathbb{Q}}}
\newcommand{\X}{X}
\newcommand{\ft}{\mathcal{T}_4}
\newcommand{\BNAT}{\operatorname{\mathbb{M}}} 
\newcommand{\BSG}{\operatorname{\mathbb{B}\SG}} 
\newcommand{\Bxy}{\operatorname{\mathbb{B}}} 
\newcommand{\NodeBT}[2]{
\begin{tikzpicture}[baseline=(current bounding box.base)]
  \filldraw[color=black] (7pt, 7pt) circle (2pt);
  \draw (7pt,7pt)--(0,0);
  \node at (-3pt,-3pt) {$#1$};
  \draw (7pt,7pt)--(14pt,0pt);
  \node at (17pt,-3pt) {$#2$};
\end{tikzpicture}}
\author[J.-C. Aval \and A. Boussicault \and B. Delcroix-Oger \and F. Hivert \and 	P. Laborde-Zubieta]{
		Jean-Christophe Aval
		\addressmark{1}
	\and 
		Adrien Boussicault
		\addressmark{1}
	\and 
		B\'{e}r\'{e}nice Delcroix-Oger
		\addressmark{2}
	\and 
		Florent Hivert
		\addressmark{3}
	\and
		Patxi Laborde-Zubieta
		\addressmark{1}
} 
\title{Non-ambiguous trees: new results and generalisation}
\address{
	\addressmark{1}
		LAboratoire Bordelais de Recherche en Informatique (UMR CNRS 5800),
		Universit\'e de Bordeaux, 33405 TALENCE \\
	\addressmark{2} 
		Institut de Math\'ematiques de Toulouse (UMR CNRS 5219),
		Universit\'e Paul Sabatier, 31062 TOULOUSE\\
	\addressmark{3}
		Laboratoire de Recherche en Informatique (UMR CNRS 8623)
                B\^atiment 650, Universit\'e Paris Sud 11, 91405 ORSAY CEDEX \\
}
\keywords{Non-ambiguous trees, binary trees, ordered trees, q-analogues,
  permutations, hook-length formulas}
\begin{document}
\maketitle
\begin{abstract}
\paragraph{Abstract.} 
We present a new definition of non-ambiguous trees (NATs) as labelled binary trees. We thus get a differential equation whose solution can be described combinatorially. This yield a new formula for the number of NATs. We also obtain $q$-versions of our formula. And we generalize NATs to higher dimension. \\

\paragraph{R\'esum\'e.} 
Nous introduisons une nouvelle d\'efinition des arbres non ambigus (NATs) en terme d'arbres binaires \'etiquet\'es. Nous en d\'eduisons une \'equation diff\'erentielle, dont les solutions peuvent \^etre d\'ecrites de mani\`ere combinatoire. Ceci conduit  \`a une nouvelle formule pour le nombre de NATs. Nous d\'emontrons aussi des $q$-versions des formules obtenues. Enfin, nous g\'en\'eralisons la notion de NAT
en dimension sup\'erieure.
\end{abstract}

\section*{Introduction}

Non-ambiguous trees (NATs for short) were introduced in a previous paper \cite{AvaBouBouSil14}.
We propose in the present article a sequel to this work.

Tree-like tableaux \cite{ABN} are certain fillings of Ferrers diagram, in simple bijection 
with permutations or alternative tableaux \cite{postnikov,viennot}. They are the subject of an intense
research activity in combinatorics, mainly because they appear as the key tools
in the combinatorial interpretation of the well-studied model of statistical mechanics
called PASEP: they naturally encode the states of the PASEP, together with
the transition probabilities through simple statistics \cite{CorteelWilliams}.

Among tree-like tableaux, NATs were defined as rectangular-shaped objects in \cite{AvaBouBouSil14}.
In this way, they are in bijection with permutation $\sigma=\sigma_1\,\sigma_2\,\dots\,\sigma_n$
such that the excedences ($\sigma_i>i$) are placed at the beginning of the word $\sigma$.
Such permutations were studied by Ehrenborg and Steingrimsson \cite{ES},
who obtained an explicit enumeration formula. Thanks to NATs, a bijective proof
of this formula was described in \cite{AvaBouBouSil14}. 

In the present work, we define NATs as labelled binary trees (see Definition \ref{def_nat}, which is equivalent
to the original definition). This new presentation allows us to obtain many new results about these objects.
The plan of the article is the following.\\
In Section \ref{nat}, we (re-)define NATs as binary trees whose right and left children are respectively labelled
with two sets of labels. We show how the generating series for these objects satisfies differential
equations (Prop. \ref{diff_eq_nat}), whose solution is quite simple and explicit (Prop. \ref{gen_ser_nat}). A combinatorial interpretation
of this expression involves the (new) notion of hooks in binary trees, linked to the notion of leaves in ordered trees. Moreover this expression yields a new 
formula for the number of NATs as a positive sum (see Theorem \ref{thm_p}), where Ehrenborg-Steingrimsson's formula is alternating.
To conclude with Section \ref{nat}, we obtain $q$-analogues of our formula, which are similar to those obtained
for binary trees in \cite{BjornerWachs,HivNovThib08} (see Theorem \ref{thm-q-hook}, the relevant statistics are either
the number of inversions or the inverse major index).\\
Section \ref{gnat} presents a generalisation of NATs in higher dimension. For any $k\le d$, we consider
NATs of dimension $(d,k)$, embedded  in $\mathbb{Z}^d$, and with edges of dimension $k$
\footnote{A definition in terms of labelled trees
is given in Subsection \ref{def_natdk}.}. 
The original case corresponds to dimension $(2,1)$. 
Our main result on this question is a differential equation satisfied by the generating series of these new objects.

This version of our work is an {\em extended abstract}; most proofs are only sketched or purely omitted.

\section{Non-ambiguous trees} \label{nat}

\subsection{Definitions} \label{definitions}

We recall that a \emph{binary tree} is a rooted tree whose vertices may have 
no child, or one left child, or one right child or both of them. 
The size of a binary tree is its number of vertices.
The empty binary tree, denoted by $\emptyset$, is the unique binary tree with 
no vertices. Having no child in one direction (left or right) is the same as
having an empty subtree in this direction.
We denote by $\BT$ the set of binary trees and by $\BT^*$ the set 
$\BT \setminus \{\emptyset\}$.
Given a binary tree $B$, we denote by $\LV(B)$ and $\RV(B)$ the set of left
children (also called left vertices) and the set of right children (also called
right vertices). We shall extend this notation to NATs.

\begin{defi} \label{def_nat} A \emph{non-ambiguous tree} (NAT) $T$ is a labelling of a
  binary tree $B$ such that :
\begin{itemize}
\item the left (resp. right) children are labelled from $1$ to $|\LV(B)|$
  (resp. $|\RV(B)|$), such that different left (resp. right) vertices have
  different labels. In other words, each left (right) label appears only once.
\item if $U$ and $V$ are two left (resp. right) children in the tree, 
such that $U$ is an ancestor of $V$, then the label of $U$ in $T$ is strictly greater 
than the label of $V$.
\end{itemize}
\end{defi}
The underlying tree of a non-ambiguous tree is called its \emph{shape}.  The
size $\n(T)$ of a NAT $T$ is its number of vertices. Clearly $\n(T) = 1
+ |\LV(T)| + |\RV(T)|$. It is sometimes useful to label the root as well. In
this case, it is considered as both a left and right child so that it carries
a pairs of labels, namely $(|\LV(T)|+1, |\RV(T)|+1)$. On pictures, to ease the
reading, we color the labels of left and right vertices in red and blue
respectively. 
Figure \ref{fig_exple_nat} shows an example of a NAT,
and illustrates the correspondence between the geometrical presentation
of \cite{AvaBouBouSil14} and Definition \ref{def_nat}.
The rectangle which contains the non-ambiguous
tree $T$ is of dimension $(\wi_L(T),\wi_R(T)) = (|\LV(T)|+1, |\RV(T)|+1)$.

\tikzset{every node/.style={inner sep=1pt,scale=0.8}}
\begin{figure} 
\begin{center}
\[T=\begin{tikzpicture}[baseline=(current bounding box.center),scale=0.35]
\node (r) at (0,8) {(\textcolor{red}{11},\textcolor{blue}{12})};
\node (11b) at (1,7) {\textcolor{blue}{11}};
\node (10b) at (-1,6) {\textcolor{blue}{10}};
\node (9b) at (0,5) {\textcolor{blue}{9}};
\node (8b) at (-1,4) {\textcolor{blue}{8}};
\node (7b) at (2,6) {\textcolor{blue}{7}};
\node (6b) at (0,3) {\textcolor{blue}{6}};
\node (5b) at (2,4){\textcolor{blue}{5}};
\node (4b) at (3,5) {\textcolor{blue}{4}};
\node (3b) at (4,4){\textcolor{blue}{3}};
\node (2b) at (3,1){\textcolor{blue}{2}};
\node (1b) at (5,3){\textcolor{blue}{1}};
\node (10r) at (-2,7){\textcolor{red}{10}};
\node (9r) at (1,5){\textcolor{red}{9}};
\node (8r) at (1,3){\textcolor{red}{8}};
\node (7r) at (3,3){\textcolor{red}{7}};
\node (6r) at (-2,5){\textcolor{red}{6}};
\node (5r) at (2,2){\textcolor{red}{5}};
\node (4r) at (-2,3){\textcolor{red}{4}};
\node (3r) at (-1,2){\textcolor{red}{3}};
\node (2r) at (-3,6){\textcolor{red}{2}};
\node (1r) at (-3,4){\textcolor{red}{1}};
\draw (2r)--(10r)--(r)--(11b)--(7b)--(4b)--(3b)--(1b);
\draw (7b)--(9r);
\draw (9r)--(5b);
\draw (5b)--(8r);
\draw (3b)--(7r);
\draw (7r)--(5r);
\draw (5r)--(2b);
\draw (10r)--(10b);
\draw (9b)--(10b)--(6r)--(1r);
\draw (6b)--(8b)--(4r);
\draw (3r)--(6b);
\draw (6r)--(8b);
\end{tikzpicture}
\hspace*{0.5cm}
\begin{tikzpicture}[baseline=(current bounding box.center),
                    scale=0.25,
                    every node/.style={inner sep=0pt,scale=0.7}]
\draw (0,0.75) node{\textcolor{blue}{12}};
\draw (1,0.75) node{\textcolor{blue}{11}};
\draw (2,0.75) node{\textcolor{blue}{10}};
\draw (3,0.75) node{\textcolor{blue}{9}};
\draw (4,0.75) node{\textcolor{blue}{8}};
\draw (5,0.75) node{\textcolor{blue}{7}};
\draw (6,0.75) node{\textcolor{blue}{6}};
\draw (7,0.75) node{\textcolor{blue}{5}};
\draw (8,0.75) node{\textcolor{blue}{4}};
\draw (9,0.75) node{\textcolor{blue}{3}};
\draw (10,0.75) node{\textcolor{blue}{2}};
\draw (11,0.75) node{\textcolor{blue}{1}};
\draw (-0.75,0) node{\textcolor{red}{11}};
\draw (-0.75,-1) node{\textcolor{red}{10}};
\draw (-0.75,-2) node{\textcolor{red}{9}};
\draw (-0.75,-3) node{\textcolor{red}{8}};
\draw (-0.75,-4) node{\textcolor{red}{7}};
\draw (-0.75,-5) node{\textcolor{red}{6}};
\draw (-0.75,-6) node{\textcolor{red}{5}};
\draw (-0.75,-7) node{\textcolor{red}{4}};
\draw (-0.75,-8) node{\textcolor{red}{3}};
\draw (-0.75,-9) node{\textcolor{red}{2}};
\draw (-0.75,-10) node{\textcolor{red}{1}};
\draw[gray!50] (0,0.5)--(0,-10.5);
\draw[gray!50] (1,0.5)--(1,-10.5);
\draw[gray!50] (2,0.5)--(2,-10.5);
\draw[gray!50] (3,0.5)--(3,-10.5);
\draw[gray!50] (4,0.5)--(4,-10.5);
\draw[gray!50] (5,0.5)--(5,-10.5);
\draw[gray!50] (6,0.5)--(6,-10.5);
\draw[gray!50] (7,0.5)--(7,-10.5);
\draw[gray!50] (8,0.5)--(8,-10.5);
\draw[gray!50] (9,0.5)--(9,-10.5);
\draw[gray!50] (10,0.5)--(10,-10.5);
\draw[gray!50] (11,0.5)--(11,-10.5);
\draw[gray!50] (-0.5,0)--(11.5,0);
\draw[gray!50] (-0.5,-1)--(11.5,-1);
\draw[gray!50] (-0.5,-2)--(11.5,-2);
\draw[gray!50] (-0.5,-3)--(11.5,-3);
\draw[gray!50] (-0.5,-4)--(11.5,-4);
\draw[gray!50] (-0.5,-5)--(11.5,-5);
\draw[gray!50] (-0.5,-6)--(11.5,-6);
\draw[gray!50] (-0.5,-7)--(11.5,-7);
\draw[gray!50] (-0.5,-8)--(11.5,-8);
\draw[gray!50] (-0.5,-9)--(11.5,-9);
\draw[gray!50] (-0.5,-10)--(11.5,-10);
\node (r) at (0,0) {$\bullet$};
\node (11b) at (12-11,0) {$\bullet$};
\node (10b) at (12-10,10-11) {$\bullet$};
\node (9b) at (12-9,10-11) {$\bullet$};
\node (8b) at (12-8,6-11) {$\bullet$};
\node (7b) at (12-7,0) {$\bullet$};
\node (6b) at (12-6,6-11) {$\bullet$};
\node (5b) at (12-5,9-11){$\bullet$};
\node (4b) at (12-4,0) {$\bullet$};
\node (3b) at (12-3,0){$\bullet$};
\node (2b) at (12-2,5-11){$\bullet$};
\node (1b) at (12-1,0){$\bullet$};
\node (10r) at (0,10-11){$\bullet$};
\node (9r) at (12-7,9-11){$\bullet$};
\node (8r) at (12-5,8-11){$\bullet$};
\node (7r) at (12-3,7-11){$\bullet$};
\node (6r) at (12-10,6-11){$\bullet$};
\node (5r) at (12-3,5-11){$\bullet$};
\node (4r) at (12-8,4-11){$\bullet$};
\node (3r) at (12-6,3-11){$\bullet$};
\node (2r) at (0,2-11){$\bullet$};
\node (1r) at (12-10,1-11){$\bullet$};
\draw (2r)--(10r)--(r)--(11b)--(7b)--(4b)--(3b)--(1b);
\draw (7b)--(9r);
\draw (9r)--(5b);
\draw (5b)--(8r);
\draw (3b)--(7r);
\draw (7r)--(5r);
\draw (5r)--(2b);
\draw (10r)--(10b);
\draw (9b)--(10b)--(6r)--(1r);
\draw (6b)--(8b)--(4r);
\draw (3r)--(6b);
\draw (6r)--(8b);
\end{tikzpicture}
\hspace*{0.5cm}
T_L=
\begin{tikzpicture}[baseline=(current bounding box.center),scale=0.35]
\node (10b) at (-1,6) {\textcolor{blue}{4}};
\node (9b) at (0,5) {\textcolor{blue}{3}};
\node (8b) at (-1,4) {\textcolor{blue}{2}};
\node (6b) at (0,3) {\textcolor{blue}{1}};
\node (10r) at (-2,7){(\textcolor{red}{6},\textcolor{blue}{5})};
\node (6r) at (-2,5){\textcolor{red}{5}};
\node (4r) at (-2,3){\textcolor{red}{4}};
\node (3r) at (-1,2){\textcolor{red}{3}};
\node (2r) at (-3,6){\textcolor{red}{2}};
\node (1r) at (-3,4){\textcolor{red}{1}};
\draw (2r)--(10r);
\draw (10r)--(10b);
\draw (9b)--(10b)--(6r)--(1r);
\draw (6b)--(8b)--(4r);
\draw (3r)--(6b);
\draw (6r)--(8b);
\end{tikzpicture}
\hspace{0.5cm}
T_R=
\begin{tikzpicture}[baseline=(current bounding box.center),scale=0.35]
\node (11b) at (1,7) {(\textcolor{red}{6},\textcolor{blue}{7})};
\node (7b) at (2,6) {\textcolor{blue}{6}};
\node (5b) at (2,4){\textcolor{blue}{5}};
\node (4b) at (3,5) {\textcolor{blue}{4}};
\node (3b) at (4,4){\textcolor{blue}{3}};
\node (2b) at (3,1){\textcolor{blue}{2}};
\node (1b) at (5,3){\textcolor{blue}{1}};
\node (9r) at (1,5){\textcolor{red}{4}};
\node (8r) at (1,3){\textcolor{red}{3}};
\node (7r) at (3,3){\textcolor{red}{2}};
\node (5r) at (2,2){\textcolor{red}{1}};
\draw (11b)--(7b)--(4b)--(3b)--(1b);
\draw (7b)--(9r);
\draw (9r)--(5b);
\draw (5b)--(8r);
\draw (3b)--(7r);
\draw (7r)--(5r);
\draw (5r)--(2b);
\end{tikzpicture}\]
\caption{A non-ambiguous tree and its left and right subtrees}\label{fig_exple_nat}
\end{center}
\end{figure}
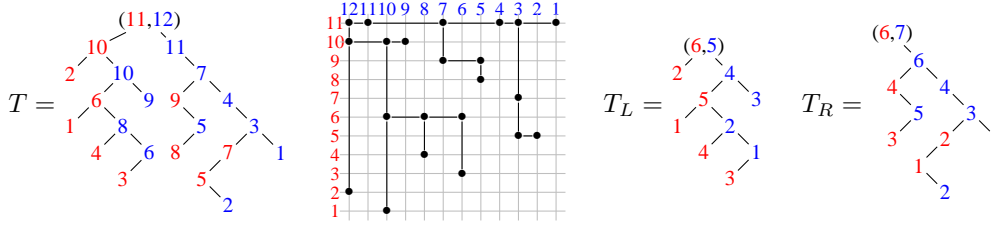

\subsection{Differential equations on non-ambiguous trees}
\label{differential} 

The goal of this section is to get (new) formulas for the number of NATs 
with prescribed shape.
The crucial argument is the following remark: Let $T$ be a NAT
of shape a non empty binary tree $B = \NodeBT{L}{R}$.
Restricting the labellings of the left and right children of $T$
to $L$ and $R$ gives non-decreasing labelling of their respective left and
right children. Note that the root of $L$ (resp. $R$) is a left
(resp. right) child in $T$. By renumbering the labels so that they are
consecutive numbers starting from $1$, we get two non-ambiguous labellings for
$L$ and $R$, that is two non-ambiguous trees $T_L$ and
$T_R$. See Figure~\ref{fig_exple_nat} for an example.

Conversely, knowing the labelling of $L$ and $R$, to recover the labelling
of $T$, one has to choose which labels among $1\dots\LV(T)$ will be used for
$L$ (including its root) and the same for right labels.  
As a consequence:
\begin{equation}\label{Equation:BNat-binom}
  \left|\NATB[\NodeBT{L}{R}]\right| =
  \binom{|\LV(T)|}{|\LV(R)|}
  \binom{|\RV(T)|}{|\RV(L)|}\,
  |\NATB[L]|\,
  |\NATB[R]|.
\end{equation}
Our first step is to recover hook-length formula for the number of
NATs of fixed shape (\cite{AvaBouBouSil14}). We
use the method from~\cite{HivNovThib08}, namely, applying recursively a
bilinear integro-differential operator called here a \emph{pumping function}
along a binary tree.

First of all, we consider the space $\QQ\NAT$ of formal sums of non-ambiguous
trees and identifies $\NATB$ with the formal sum of its elements.
We consider the map $\BNAT:\NAT\times\NAT\mapsto\QQ\NAT$
sending $(T_1,T_2)$ to the formal sum of NATs $T$ such that
$T_L=T_1$ and $T_R=T_2$.
By linearity, we extend $\BNAT$ to a bilinear map
$\QQ\NAT\times\QQ\NAT\mapsto\QQ\NAT$. The main remark is that $\NATB$ can be
computed by a simple recursion using $\BNAT$:
\begin{lem}
  The set $\NATB[B]$ of non-ambiguous tree of shape $B$ satisfies the following recursion:
  \begin{equation}
    \NATB[\emptyset] = \emptyset
    \qquad\text{and}\qquad
    \NATB[\NodeBT{L}{R}] = \BNAT\left(\NATB[L], \NATB[R]\right)\,.
  \end{equation}
\end{lem}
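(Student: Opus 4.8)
The plan is to deduce both equalities directly from the restriction/renumbering correspondence $T \mapsto (T_L, T_R)$ described just before Equation~\eqref{Equation:BNat-binom}, combined with the bilinearity of $\BNAT$. The base case is immediate: the empty binary tree carries only the empty labelling, so the empty NAT is the unique element of $\NATB[\emptyset]$, and this formal sum reduces to the single term $\emptyset$.

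Fix $B = \NodeBT{L}{R}$. The only point requiring genuine verification is that $\Phi\colon T \mapsto (T_L, T_R)$ is a well-defined map from the set of NATs of shape $B$ to $\{\text{NATs of shape }L\}\times\{\text{NATs of shape }R\}$; this is precisely the ``crucial argument'' recalled above. Indeed, restricting the labelling of the left (resp. right) children of $T$ to the subtree $L$ (resp. $R$) keeps these labels pairwise distinct and preserves the strict ancestor--descendant inequalities of Definition~\ref{def_nat} (the key observation being that, for two vertices of $L$, the ancestor relation inside $L$ is the restriction of the ancestor relation in $B$), and renumbering by the unique increasing bijection onto an initial segment preserves these strict inequalities while making the left, resp. right, labels consecutive starting from $1$; finally, the root of $L$ (resp. $R$) is the left (resp. right) child of the root of $B$, hence carries a label in $T$ which, after renumbering, supplies one component of its root pair in $T_L$ (resp. $T_R$), the other component being the value $|\RV(T_L)|+1$ (resp. $|\LV(T_R)|+1$) prescribed by the convention stated after Definition~\ref{def_nat}. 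Since a non-empty binary tree is uniquely determined by the ordered pair of its left and right subtrees, a non-empty NAT $T$ has shape $B$ if and only if $T_L$ has shape $L$ and $T_R$ has shape $R$.

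With this in hand, the lemma follows by unwinding the definitions. By the definition of $\BNAT$ and bilinearity,
\begin{equation*}
  \BNAT\bigl(\NATB[L],\NATB[R]\bigr)
  = \sum_{\substack{T_1\text{ of shape }L\\ T_2\text{ of shape }R}}\BNAT(T_1,T_2)
  = \sum_{\substack{T_1\text{ of shape }L\\ T_2\text{ of shape }R}}\ \sum_{\substack{T\in\NAT\\ T_L=T_1,\ T_R=T_2}} T
  = \sum_{T\in\NAT} c_T\, T,
\end{equation*}
where $c_T$ is the number of pairs $(T_1,T_2)$ with $T_1$ of shape $L$, $T_2$ of shape $R$ and $(T_L,T_R)=(T_1,T_2)$. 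As $(T_L,T_R)$ is determined by $T$, one has $c_T = 1$ exactly when $T_L$ has shape $L$ and $T_R$ has shape $R$, i.e.\ (by the previous paragraph) when $T$ has shape $B$, and $c_T = 0$ otherwise. Hence $\BNAT(\NATB[L],\NATB[R]) = \sum_{T\text{ of shape }B} T = \NATB[B]$, as claimed. The only slightly delicate part of the argument is the well-definedness of $\Phi$ discussed above; everything else is formal bookkeeping.
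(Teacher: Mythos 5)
Your proof is correct and follows exactly the route the paper intends: the lemma is stated without a separate proof precisely because it is a formal restatement of the restriction/renumbering correspondence $T\mapsto(T_L,T_R)$ described in the paragraph preceding Equation~\eqref{Equation:BNat-binom}, and your argument just makes that correspondence and the bilinearity bookkeeping explicit. The verification that $c_T\le 1$ (because $(T_L,T_R)$ is determined by $T$) and that $c_T=1$ exactly for $T$ of shape $B$ is the right way to nail it down.
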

To count non-ambiguous trees, and as suggested by the binomial coefficients in~\eqref{Equation:BNat-binom},
we shall use \emph{doubly exponential generating
  functions} in two variables $x$ and $y$ where $x$ and $y$ count the size of
the rectangle 
in which the NAT is embedded: the weight of the NAT $T$ is
$\Phi(T) := \frac{x^{\wi_L(T)}}{\wi_L(T)!} \frac{x^{\wi_R(T)}}{\wi_R(T)!}\,.$
We extend $\Phi(T)$ by linearity to a map $\QQ\NAT\mapsto\QQ[[x,
y]]$. Consequently, $\Phi(\NATB[B])$ is the generating series of the
non-ambiguous trees of shape $B$. Thanks to~\eqref{Equation:BNat-binom} the image in $\QQ[[x, y]]$ of the bilinear
map $\BNAT$ under the map $\Phi$ is a simple differential operator:
\begin{defi}
  The \emph{pumping function} $\Bxy$ is the bilinear map
  $\QQ[[x,y]]\times\QQ[[x,y]]\mapsto\QQ[[x,y]]$ defined by
  \begin{equation}
    \Bxy(u, v) = \int_x\int_y
    \partial_x(u)\cdot
    \partial_y(v).
  \end{equation}
  We further define recursively, for any binary tree $B$
  an element $\Bxy(B)\in\QQ[[x,y]]$ by
  \begin{equation}
  \Bxy(\emptyset) = x + y
  \qquad\text{and}\qquad
  \Bxy\left({\NodeBT{L}{R}}\right) = \Bxy\left(\Bxy(L), \Bxy(R)\right)\,.
\end{equation}
\end{defi}
Now~\eqref{Equation:BNat-binom} rewrites as
\begin{prop}
\label{prop:commutation_phi}
  For $T_1, T_2\in\QQ\NAT$, one as
  $\Phi(\BNAT(T_1, T_2)) = \Bxy(\Phi(T_1),\Phi(T_2))$.
  As a consequence, for any non empty binary tree $B$,
  $\Phi(\NATB[B])=\Bxy(B)$.
\end{prop}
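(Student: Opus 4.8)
The plan is to establish the two assertions in turn: the first by bilinearity, which reduces it to a finite check on pairs of single non-ambiguous trees governed entirely by~\eqref{Equation:BNat-binom}, and the second by a short structural induction on $B$ that feeds on the first.

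\emph{Step 1: the identity $\Phi(\BNAT(T_1,T_2))=\Bxy(\Phi(T_1),\Phi(T_2))$.} Since $\BNAT$, $\Phi$ and $\Bxy$ are all (bi)linear, I would reduce to the case where $T_1$ and $T_2$ are single non-ambiguous trees, of shapes $L$ and $R$ respectively. Every non-ambiguous tree $T_0$ occurring in $\BNAT(T_1,T_2)$ then has the same shape $\NodeBT{L}{R}$, hence the same numbers $|\LV(T_0)|$, $|\RV(T_0)|$ of left and right vertices, hence the same weight $\Phi(T_0)$; and the reconstruction recalled just before~\eqref{Equation:BNat-binom} --- choosing which of the $|\LV(T_0)|$ left labels, resp. the $|\RV(T_0)|$ right labels, are used inside the left subtree together with its root --- shows that there are exactly $\binom{|\LV(T_0)|}{|\LV(R)|}\binom{|\RV(T_0)|}{|\RV(L)|}$ of them, so that
\[
  \Phi(\BNAT(T_1,T_2)) = \binom{|\LV(T_0)|}{|\LV(R)|}\binom{|\RV(T_0)|}{|\RV(L)|}\,\Phi(T_0).
\]
On the other side, $\Phi(T_1)$ and $\Phi(T_2)$ are monomials, and evaluating $\Bxy$ on a product of two monomials is immediate: each $\partial$ lowers an exponent by one and each $\int$ raises an exponent by one while dividing by its new value. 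Using $|\LV(T_0)|=1+|\LV(L)|+|\LV(R)|$ and $|\RV(T_0)|=1+|\RV(L)|+|\RV(R)|$ (the root of $L$ is a left child of $T_0$, and the root of $R$ a right child), I expect $\Bxy(\Phi(T_1),\Phi(T_2))$ to come out as a monomial of the same bidegree as $\Phi(T_0)$, with the two scalar prefactors matching through the elementary identity $\binom{m+n-1}{m}=\frac{(m+n-1)!}{m!\,(n-1)!}$ applied to each of the binomials above --- provided the variables $x,y$ of $\Phi$ are aligned with the derivatives $\partial_x,\partial_y$ of $\Bxy$ in the order forced by~\eqref{Equation:BNat-binom}.

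\emph{Step 2: the consequence.} I would first extend the identity of Step 1 to the case where $T_1$ or $T_2$ is the empty non-ambiguous tree, by assigning the empty non-ambiguous tree the weight $x+y=\Bxy(\emptyset)$; the same computation applies, one of the two binomial coefficients degenerating to $1$. Then, for a non empty binary tree $B=\NodeBT{L}{R}$, I argue by structural induction, reading the statement for $L$ and for $R$ through this convention when one of them is empty: the Lemma gives $\NATB[B]=\BNAT(\NATB[L],\NATB[R])$, so Step 1 yields
\[
  \Phi(\NATB[B]) = \Bxy\left(\Phi(\NATB[L]),\,\Phi(\NATB[R])\right) = \Bxy\left(\Bxy(L),\,\Bxy(R)\right) = \Bxy(B),
\]
the last equality being the recursive definition of $\Bxy$. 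The base case $B=\NodeBT{\emptyset}{\emptyset}$ is the instance $L=R=\emptyset$, and can also be checked directly: $\NATB[B]$ is the single-vertex non-ambiguous tree, of weight $xy$, and $\Bxy(x+y,x+y)=xy$ as well.

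\emph{Main obstacle.} Everything is routine once~\eqref{Equation:BNat-binom} is available; the delicate point is the bookkeeping in Step 1, where the root of a non-ambiguous tree counts simultaneously as a left and a right child --- so that passing from $T$ to the pair $(T_L,T_R)$ shifts the rectangle dimensions asymmetrically --- and where the roles of $x$ and $y$ must be kept consistent between $\Phi$ and $\Bxy$ for the two binomial coefficients of~\eqref{Equation:BNat-binom} to be reproduced exactly.
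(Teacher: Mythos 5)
Your proposal is correct and follows exactly the route the paper intends (the paper states the proposition as an immediate rewriting of~\eqref{Equation:BNat-binom}, so your reduction by bilinearity to single trees, the monomial computation of $\Bxy$, and the structural induction using the Lemma and the recursive definition of $\Bxy(B)$ is precisely the omitted argument). The one subtlety you flag is genuine: the prefactors do match, but only once the obvious typo in $\Phi$ (the second factor should be $\frac{y^{\wi_R(T)}}{\wi_R(T)!}$) is fixed and the derivatives $\partial_x,\partial_y$ are paired with the left and right arguments in the order forced by~\eqref{Equation:BNat-binom}, so your cautious wording there is warranted rather than a gap.
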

By de-recursiving the expression for $\Bxy(B)$, we recover the hook-length
formula of~\cite{AvaBouBouSil14} for non-ambiguous trees of a given shape:
\begin{prop}
\label{prop_hook_nat}
Let $\B$ be a binary tree. For each left (resp. right) vertex $U$, we denote
$\EL(U)$ (resp. $\ER(U)$) the number of left (resp. right) vertices of the
subtree with root $U$ (itself included in the count). Then
\begin{equation}
|\NAT(\B)|
=
\frac{ |\LV(\B)|! \cdot |\RV(\B)|!}{
  \displaystyle
  \prod_{U: \text{left child}}\EL(U) \cdot 
  \prod_{U: \text{right child}}\ER(U)
}\,.
\end{equation}
\end{prop}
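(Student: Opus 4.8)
The plan is to prove the hook-length formula by induction on the size of $\B$, using Proposition~\ref{prop:commutation_phi} to reduce the combinatorial statement to an identity about the pumping function $\Bxy$ and the operator $\Bxy(u,v) = \int_x\int_y \partial_x(u)\cdot\partial_y(v)$. The key observation is that, by the recursive structure, $\Bxy(\B)$ for a nonempty tree will be a monomial-like expression; more precisely, I claim that for any nonempty binary tree $\B$ one has
\begin{equation}\label{eq:plan-claim}
\Bxy(\B) = |\NAT(\B)|\cdot
\frac{x^{|\LV(\B)|+1}}{(|\LV(\B)|+1)!}\cdot
\frac{y^{|\RV(\B)|+1}}{(|\RV(\B)|+1)!}\,,
\end{equation}
which is just the restatement of $\Phi(\NATB[\B])=\Bxy(\B)$ using the definition of $\Phi$ and the fact that every NAT of shape $\B$ has the same $\wi_L,\wi_R$. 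So the real content is to show that $|\NAT(\B)|$ satisfies the recursion obtained by plugging~\eqref{eq:plan-claim} into $\Bxy(\B)=\Bxy(\Bxy(L),\Bxy(R))$, and then to de-recursivize that into the product formula.

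First I would set up notation: write $\ell = |\LV(\B)|$, $r = |\RV(\B)|$, and for $\B = \NodeBT{L}{R}$ note $\ell = \ell_L + \ell_R + 1$ and $r = r_L + r_R + 1$ (the root of $L$ is a left child in $\B$, the root of $R$ a right child). Then I would compute $\Bxy(u,v)$ on the two monomials $u = c\,\frac{x^{a+1}}{(a+1)!}\frac{y^{b+1}}{(b+1)!}$ and $v = c'\,\frac{x^{a'+1}}{(a'+1)!}\frac{y^{b'+1}}{(b'+1)!}$: applying $\partial_x$ to $u$, $\partial_y$ to $v$, multiplying, and applying $\int_x\int_y$ gives $c\,c'\,\frac{x^{a+a'+2}}{(a+a'+2)!}\cdot\binom{a+a'+1}{a}\cdot(\text{symmetric }y\text{-factor with }\binom{b+b'+1}{b'})$ — one checks the binomials appear exactly because $\int_x x^{a}\cdot x^{a'}\,/(a!\,a'!) = x^{a+a'+1}/(a+a'+1)! \cdot \binom{a+a'+1}{a}$. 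Matching this against~\eqref{eq:plan-claim} with $(a,b)=(\ell_L,r_L)$, $(a',b')=(\ell_R,r_R)$ yields precisely
\begin{equation}
|\NAT(\B)| = \binom{\ell}{\ell_R}\binom{r}{r_L}\,|\NAT(L)|\,|\NAT(R)|\,,
\end{equation}
which is exactly~\eqref{Equation:BNat-binom} as it must be; this confirms~\eqref{eq:plan-claim} by induction (base case $\B = \NodeBT{\emptyset}{\emptyset}$, where $\Bxy(\emptyset)=x+y$ so $\Bxy(\B)=\int_x\int_y 1\cdot 1 = xy$, matching $|\NAT(\B)|=1$, $\ell=r=0$).

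It remains to de-recursivize. I would introduce $h(\B) := \prod_{U \text{ left}}\EL(U)\cdot\prod_{U\text{ right}}\ER(U)$ and prove, again by induction on $\B = \NodeBT{L}{R}$, that $h(\B) = \EL(\mathrm{root}_L)\cdot\ER(\mathrm{root}_R)\cdot h(L)\cdot h(R) = \ell\cdot r\cdot h(L)\cdot h(R)$, since the only left (resp. right) vertex of $\B$ not already in $L$ or $R$ is the root of $L$ (resp. $R$), and its subtree contains all $\ell = \ell_L+\ell_R+1$ left vertices (resp. all $r$ right vertices) — wait, more carefully: the root of $L$ is a left vertex whose subtree is $L$ itself, contributing $\ell_L+1$; and every right vertex of $R$ has the same $\ER$ inside $\B$ as inside $R$, while the right vertices of $L$ also keep their $\ER$; but the root of $R$ now sits below the root of $\B$ — its subtree is still $R$, contributing $r_R+1$. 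So in fact $h(\B) = (\ell_L+1)(r_R+1)\,h(L)\,h(R)$. Then the claimed formula $|\NAT(\B)| = \ell!\,r!/h(\B)$ follows by induction: $\ell!\,r!/h(\B) = \frac{\ell!\,r!}{(\ell_L+1)(r_R+1)\,h(L)\,h(R)}$ and one checks this equals $\binom{\ell}{\ell_R}\binom{r}{r_L}\cdot\frac{\ell_L!\,r_L!}{h(L)}\cdot\frac{\ell_R!\,r_R!}{h(R)}$ by expanding the binomials and cancelling — the identity needed is $\frac{\ell!}{(\ell_L+1)\,\ell_L!\,\ell_R!} = \frac{1}{\ell_L+1}\binom{\ell}{\ell_R}\cdot\frac{1}{?}$, which I expect to be the one place demanding care with the "$+1$" shifts (the root is counted as both a left and right child in $L$ and $R$ respectively). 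The main obstacle is thus purely bookkeeping: tracking which of $\ell_L$ versus $\ell_L+1$ (and similarly on the right) appears in each factor, since the roots of $L$ and $R$ change status when grafted under the new root of $\B$. Once the shift conventions are pinned down, both inductions are mechanical.
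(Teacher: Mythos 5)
Your overall strategy is the paper's: establish the branching recursion for $|\NAT(\B)|$ and unfold it by induction on the shape. The second half of your argument --- the de-recursivization --- is correct and is the real content. Writing $h(\B)$ for the hook product, the only left (resp.\ right) vertex of $\B$ not already counted inside $L$ or $R$ is the root of $L$ (resp.\ of $R$), whose subtree is $L$ (resp.\ $R$) itself, so indeed $h(\B)=(\ell_L+1)(r_R+1)\,h(L)\,h(R)$; combined with \eqref{Equation:BNat-binom} and the identity $\binom{\ell}{\ell_R}\,\ell_L!\,\ell_R!=\ell!/(\ell_L+1)$ --- valid because $\ell-\ell_R=\ell_L+1$ --- the induction closes. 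The ``?'' you leave in your last display is exactly this identity, and it does check out.

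The first half, however, contains a genuine error. With your conventions ($x$ attached to $|\LV|+1$ and $y$ to $|\RV|+1$), the operator $\Bxy(u,v)=\int_x\int_y\partial_x(u)\,\partial_y(v)$ applied to your two monomials produces the multiplier $\binom{a+a'+1}{a}=\binom{\ell}{\ell_L}=\binom{\ell}{\ell_R+1}$ in the $x$-factor and $\binom{b+b'+1}{b'}=\binom{r}{r_R}=\binom{r}{r_L+1}$ in the $y$-factor, \emph{not} $\binom{\ell}{\ell_R}\binom{r}{r_L}$ as you assert. So the claimed match with \eqref{Equation:BNat-binom} fails, and your first induction does not go through: in fact your monomial expression for $\Bxy(\B)$ is false with $x$ tracking left vertices. (Test $\B=$ root with a single left child: $\Bxy(\B)=\int_x\int_y\partial_x(xy)\,\partial_y(x+y)=xy^2/2$, whereas your formula predicts $x^2y/2$.) The source of the slip is that in Proposition~\ref{prop:commutation_phi} the variable $x$ must track $\wi_R$ and $y$ must track $\wi_L$ (the paper's printed definition of $\Phi$ has a typo, using $x$ in both factors, which hides this); with that swap the operator computation does reproduce $\binom{\ell}{\ell_R}\binom{r}{r_L}$. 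Note also that this detour is unnecessary for the proposition: the recursion \eqref{Equation:BNat-binom} is already established combinatorially before any generating functions appear, and your second induction applied directly to it gives the hook formula. As written, though, the step ``matching yields precisely \eqref{Equation:BNat-binom}'' is wrong, and since your de-recursivization relies on the correct binomials $\binom{\ell}{\ell_R}\binom{r}{r_L}$ while your operator computation delivers the shifted ones, the two halves of your proof are mutually inconsistent until the convention is fixed.
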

\bigskip

We consider now the \emph{exponential generating function of non-ambiguous
  trees} with weight $\Phi$:
\begin{equation}
\GFH := \sum_{\N \in \NAT} \Phi(\N) = 
  \sum_{\N \in \NAT} \frac{x^{\wi_L(T)}}{\wi_L(T)!} \frac{x^{\wi_R(T)}}{\wi_R(T)!}\,.
\end{equation}
It turns out that we need to consider the two
following slight modifications to get nice algebraic properties
(because of the empty NAT).
\begin{equation}
\LGFN = \sum_{\B \in \BT} \Bxy(\B)
\hspace{.5cm}
\text{ and }
\hspace{.5cm}
\GFN
=
\sum_{\N\in\NAT^*}
	\frac{
		x^{|\LV(\N)|} \cdot y^{|\RV(\N)|}
	}{
		|\LV(\N)|! \cdot |\RV(\N)|!
	}
.
\end{equation}
The function $\GFH$, $\GFN$, $\LGFN$ are closely related. Each function is
used in different situation. The first one is the natural definition we want
to give. The second one is convenient from a bijective point of view.
The last one is convenient from the algebraic and analytic point of view.
They differ by their constant term and shift in the degree. Precisely,
$\GFN = \partial_x \partial_y \GFH$ so that
\begin{equation}
\GFH = 1 + \int_x \int_y \GFN
\hspace{.5cm}
\text{ and }
\hspace{.5cm}
\LGFN = x + y + \int_x \int_y \GFN
\hspace{.5cm}
\text{ and }
\hspace{.5cm}
\LGFN = \GFH + x + y - 1
\end{equation}
The two last relations are consequences of Proposition~\ref{prop:commutation_phi}.
\begin{prop}
\label{prop_equ_diff_nat}
The generating function $\GFN$ and $\LGFN$ can be computed by the following
fixed point differential equations:
\begin{equation}
\label{equ_gfn}
\LGFN = x + y + \int_x \int_y \partial_x \LGFN \cdot \partial_y \LGFN 
\hspace{.5cm}
\text{and}
\hspace{.5cm}
\GFN = 
\left( 1 + \int_{x} \GFN  \right)
\cdot
\left( 1 + \int_{y} \GFN  \right)
\end{equation}
\end{prop}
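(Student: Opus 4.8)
The plan is to derive the first identity directly from the definitions of $\LGFN$ and of the pumping function $\Bxy$, and then to obtain the second identity simply by differentiating the first one.

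For the fixed point equation on $\LGFN$, I would start from $\LGFN = \sum_{\B \in \BT} \Bxy(\B)$ and split the sum according to whether $\B$ is empty or not. The empty tree contributes $\Bxy(\emptyset) = x + y$. A non-empty binary tree is uniquely of the form $\B = \NodeBT{L}{R}$ with $(L,R) \in \BT\times\BT$, so the recursive definition $\Bxy(\NodeBT{L}{R}) = \Bxy(\Bxy(L), \Bxy(R))$ together with the bilinearity of $\Bxy$ gives
\[
\sum_{\B \in \BT^*} \Bxy(\B)
= \sum_{(L,R)\in\BT\times\BT} \Bxy\bigl(\Bxy(L),\Bxy(R)\bigr)
= \Bxy\Bigl(\textstyle\sum_{L\in\BT}\Bxy(L),\ \sum_{R\in\BT}\Bxy(R)\Bigr)
= \Bxy(\LGFN,\LGFN).
\]
Since $\Bxy(\LGFN,\LGFN) = \int_x\int_y \partial_x\LGFN\cdot\partial_y\LGFN$ by the very definition of $\Bxy$, adding back the $x+y$ coming from $\emptyset$ gives exactly the first equation of~\eqref{equ_gfn}.

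The one point requiring care is the interchange of the infinite sums with the bilinear map $\Bxy$. For this I would first observe that $\Bxy$ sends a pair of homogeneous series of total degrees $a$ and $b$ to a homogeneous series of total degree $a+b$ (the map $u,v \mapsto \int_x\int_y \partial_x u \cdot \partial_y v$ lowers the total degree by $2$ and then raises it by $2$); by induction on the size this shows that $\Bxy(\B)$ is homogeneous of total degree $\n(\B)+1$. Hence every homogeneous component of every series written above is a \emph{finite} sum, indexed by the finitely many binary trees with a fixed number of vertices, so all the rearrangements (including the very definition of $\LGFN$ as an infinite sum) are legitimate in $\QQ[[x,y]]$. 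This grading bookkeeping is the only genuine obstacle; everything else is a direct unwinding of definitions.

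For the second equation, I would apply $\partial_x\partial_y$ to the first one. Since $\partial_x\partial_y(x+y)=0$ and $\partial_x\partial_y\int_x\int_y$ is the identity, this yields $\partial_x\partial_y\LGFN = \partial_x\LGFN\cdot\partial_y\LGFN$. Now the relations recalled just before the statement give $\partial_x\partial_y\LGFN = \GFN$ (using $\LGFN = \GFH + x + y - 1$ and $\GFN = \partial_x\partial_y\GFH$), while differentiating $\LGFN = x + y + \int_x\int_y\GFN$ gives $\partial_x\LGFN = 1 + \int_y\GFN$ and $\partial_y\LGFN = 1 + \int_x\GFN$. Substituting these three identities and using commutativity of multiplication in $\QQ[[x,y]]$ gives $\GFN = \bigl(1+\int_x\GFN\bigr)\bigl(1+\int_y\GFN\bigr)$, which is the second identity of Proposition~\ref{prop_equ_diff_nat}.
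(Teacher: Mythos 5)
Your proof is correct and follows essentially the same route as the paper: split $\LGFN=\sum_{\B}\Bxy(\B)$ into the empty-tree term $x+y$ plus the sum over $\NodeBT{L}{R}$, use bilinearity to get $\Bxy(\LGFN,\LGFN)$, then obtain the second equation by applying $\partial_x\partial_y$ and using $\GFN=\partial_x\partial_y\LGFN$ together with $\partial_x\LGFN=1+\int_y\GFN$ and $\partial_y\LGFN=1+\int_x\GFN$. Your added grading argument justifying the interchange of the infinite sum with the bilinear map is a welcome extra precision that the paper leaves implicit.
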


\begin{proof}
The first equation is a just a consequence of the definition of the bilinear map $\Bxy$:
$$
\LGFN
=
x + y + \sum_{L, R \in \BT} 
\Bxy\left({\NodeBT{L}{R}}\right) 
=
x + y + \sum_{L, R \in \BT} \Bxy(\Bxy(L), \Bxy(R))
=
x + y + \Bxy( \LGFN, \LGFN )
.
$$
To prove the second equation, remark that the first can be rewritten as
$\partial_x \partial_y \LGFN = \partial_x \LGFN. \partial_y \LGFN$. So that,
$\GFN = \partial_x \partial_y \GFH = \partial_x \partial_y \LGFN$.
To conclude, it suffices to remark that
$\partial_x \LGFN = 1 + \int_y \GFN$
\end{proof}

Now, a closed formula can be computed for $\GFN$ and $\GFH$.

\begin{prop}
The exponential generating function for non-ambiguous trees are given by
$$
\GFN
=
\frac{
e^{x+y}
}{
\left(
1 - (e^x - 1)(e^y - 1)
\right)^2
}\,,
\quad\text{and}\quad
\GFH = -\log( 1 - (e^x-1)(e^y-1) ).
$$
\end{prop}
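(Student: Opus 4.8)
The plan is to integrate the fixed-point differential equation for $\LGFN$ of Proposition~\ref{prop_equ_diff_nat} into a closed form, and then to read off $\GFH$ and $\GFN$ from the elementary relations linking the three series. Rewriting that equation as
\[
\partial_x\partial_y\LGFN \;=\; \partial_x\LGFN\cdot\partial_y\LGFN,
\]
I first observe that evaluating the fixed point $\LGFN = x+y+\Bxy(\LGFN,\LGFN)$ at $y=0$ (resp. $x=0$) kills the double integral hidden in $\Bxy$ and yields the boundary data $\LGFN(x,0)=x$ and $\LGFN(0,y)=y$; in particular $\LGFN$ has zero constant term. Uniqueness of the power-series solution is automatic here, since $\Bxy$ strictly raises the total degree, so the coefficients of $\LGFN$ are forced recursively from $[x^0y^0]\LGFN=0$.

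The heart of the proof is a classical integration trick for this type of PDE. Set $u:=\partial_x\LGFN$; since $\partial_x\LGFN = 1+\int_y\GFN$ (established in the proof of Proposition~\ref{prop_equ_diff_nat}), $u$ has constant term $1$, so $\log u$ is a well-defined element of $\QQ[[x,y]]$, and the equation above becomes $\partial_y\log u = \partial_y\LGFN$. Hence $\log u-\LGFN$ does not depend on $y$, i.e. $\log u = \LGFN + c(x)$ for some $c\in\QQ[[x]]$. Consequently
\[
\partial_x\big(e^{-\LGFN}\big) \;=\; -e^{-\LGFN}\,\partial_x\LGFN \;=\; -e^{c(x)},
\]
whose right-hand side involves $x$ only; integrating in $x$ gives $e^{-\LGFN} = d(y)-C(x)$ with $C'=e^{c}$ and $d\in\QQ[[y]]$.

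It remains to pin down $C$ and $d$ from the boundary data: $\LGFN(x,0)=x$ forces $e^{-x}=d(0)-C(x)$, and $\LGFN(0,y)=y$ forces $e^{-y}=d(y)-C(0)$. Solving, $C(x)=d(0)-e^{-x}$ and $d(y)=C(0)+e^{-y}$, and since $C(0)-d(0)=-1$ (the $x=0$ case of the first relation), the undetermined constant cancels in $e^{-\LGFN}=d(y)-C(x)=e^{-x}+e^{-y}-1$, that is $\LGFN=-\log\!\big(e^{-x}+e^{-y}-1\big)$. Finally, the identity $1-(e^x-1)(e^y-1)=e^x+e^y-e^{x+y}=e^{x+y}\big(e^{-x}+e^{-y}-1\big)$ turns this into $\LGFN=(x+y)-\log\!\big(1-(e^x-1)(e^y-1)\big)$, and substituting into the relations among $\LGFN$, $\GFH$ and $\GFN$ from the previous subsection — together with the one-line computation $\partial_x\partial_y\big(-\log(1-(e^x-1)(e^y-1))\big)=\frac{e^{x+y}}{(1-(e^x-1)(e^y-1))^2}$ — gives the two stated formulas.

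I expect the middle step to be the only genuine obstacle: spotting that $u=\partial_x\LGFN$ satisfies $\partial_y\log u=\partial_y\LGFN$, which reduces the bilinear integro-differential fixed point to a separable first-order equation, and then being careful that the ``constants of integration'' are here power series in the complementary variable, determined by the two boundary conditions $\LGFN(x,0)=x$ and $\LGFN(0,y)=y$. The uniqueness argument, the algebraic identity, and the final differentiation are all routine. (Alternatively, and more cheaply, one may simply substitute the claimed expression for $\LGFN$ into the fixed-point equation, check that it is a solution, and invoke the uniqueness noted above; the derivation sketched here has the merit of explaining where the formula comes from.)
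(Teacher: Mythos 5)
Your derivation is correct. The paper, being an extended abstract, omits the proof of this proposition entirely, so there is nothing to compare against line by line; but your route is a legitimate and self-contained one. The key points all hold in $\QQ[[x,y]]$: the boundary data $\LGFN(x,0)=x$, $\LGFN(0,y)=y$ do follow from the fixed point (the double integral is divisible by $xy$); $u=\partial_x\LGFN$ has constant term $1$, so $\log u$ exists and the reduction of the Liouville-type equation $\partial_x\partial_y\LGFN=\partial_x\LGFN\,\partial_y\LGFN$ to $\partial_y\log u=\partial_y\LGFN$ is valid; and the two ``constants of integration'' $C(x)$, $d(y)$ are indeed pinned down by the boundary data, with the residual constant cancelling in $d(y)-C(x)$. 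One can check directly that $\LGFN=(x+y)-\log\bigl(1-(e^x-1)(e^y-1)\bigr)$ satisfies $\partial_x\LGFN=e^y/F$, $\partial_y\LGFN=e^x/F$, $\partial_x\partial_y\LGFN=e^{x+y}/F^2$ with $F=1-(e^x-1)(e^y-1)$, confirming both the fixed point and the formula for $\GFN$. Your closing remark is also apt: given the degree-raising uniqueness argument, the cheapest rigorous proof is verify-and-invoke-uniqueness, while your integration explains where the logarithm comes from (as does the paper's later combinatorial interpretation via cycles in Subsection~\ref{bijectionslabelledtree}).

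One small caveat, which is inherited from the paper rather than introduced by you: your (correct) value of $\LGFN$ combined with the paper's relation $\LGFN=\GFH+x+y-1$ yields $\GFH=1-\log\bigl(1-(e^x-1)(e^y-1)\bigr)$, which differs from the stated $\GFH=-\log(\cdots)$ by the constant $1$. This is an off-by-one in the paper's own conventions concerning the weight of the empty non-ambiguous tree (the relation $\GFH=1+\int_x\int_y\GFN$ forces constant term $1$, while $-\log(\cdots)$ has constant term $0$ and enumerates only non-empty NATs). You should state explicitly which convention you adopt rather than asserting that the substitution ``gives the two stated formulas'' verbatim.
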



Now, we will introduce two statistics : the number of right (resp. left) 
vertices in the rightmost (resp. leftmost) branch of the root of a tree.
For a binary tree $\B$, we will denote by $\RO(\B)$ (resp. $\LO(\B)$) the two
previous statistics.
We define now an $(\alpha, \beta)$-generating function for non-ambiguous trees:
$$
\GFN^{(\alpha, \beta)}
=
\sum_{\N \in \NAT}
	\frac{
		x^{|\LV(\N)|} \cdot y^{|\RV(\N)|} \cdot \alpha^{\RO(\N)} \cdot \beta^{\LO(\N)}
	}{
		|\LV(\N)|! \cdot |\RV(\N)|!
	}.
$$
\begin{prop}\label{diff_eq_nat}
A differential equation for $\GFN^{(\alpha, \beta)}$ is
$$
		\GFN^{(\alpha, \beta)}	=
		\left(
			1 + \alpha \int_x \GFN^{(\alpha, 1)}
		\right)
		\cdot
		\left(
			1 + \beta \int_y \GFN^{(1, \beta)}
		\right),
$$
\end{prop}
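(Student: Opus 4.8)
The plan is to extend the combinatorial decomposition underlying the identity $\GFN=\bigl(1+\int_x\GFN\bigr)\bigl(1+\int_y\GFN\bigr)$ of Proposition~\ref{prop_equ_diff_nat} so that it also records $\RO$ and $\LO$. The essential move is to split a non-empty NAT \emph{at its root, in one step}, rather than recursively. Given a NAT $T$ of shape $\NodeBT{L}{R}$, I would form the pair $(T_\ell,T_r)$, where $T_\ell$ is the NAT of shape $\NodeBT{L}{\emptyset}$ carrying the left subtree of the root of $T$ --- the labels renormalised so that its new root gets the largest left label --- and $T_r$ the NAT of shape $\NodeBT{\emptyset}{R}$ carrying the right subtree. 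On shapes, $\NodeBT{L}{R}\mapsto(\NodeBT{L}{\emptyset},\NodeBT{\emptyset}{R})$ is a bijection from $\BT^*$ onto the set of pairs formed by a binary tree whose root has no right child and a binary tree whose root has no left child, and $|\NATB[\NodeBT{L}{\emptyset}]|=|\NATB[L]|$, $|\NATB[\NodeBT{\emptyset}{R}]|=|\NATB[R]|$.

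Then I would check that all four statistics are additive along this decomposition: $|\LV(T)|=|\LV(T_\ell)|+|\LV(T_r)|$ and $|\RV(T)|=|\RV(T_\ell)|+|\RV(T_r)|$ (when $L\neq\emptyset$ the root of $L$ becomes a left vertex of $T$, and when $R\neq\emptyset$ the root of $R$ a right vertex, exactly as in~\eqref{Equation:BNat-binom}), whereas $\RO(T_\ell)=0$, $\LO(T_r)=0$, $\LO(T_\ell)=\LO(T)$ and $\RO(T_r)=\RO(T)$, because the leftmost (resp.\ rightmost) branch of the root of $T$ stays inside $L$ (resp.\ $R$). Since moreover $|\RV(L)|=|\RV(T_\ell)|$ and $|\LV(R)|=|\LV(T_r)|$, the two binomial coefficients of~\eqref{Equation:BNat-binom} are exactly those that appear when one multiplies the doubly exponential generating series in $x$ and $y$ of $\{T_\ell\}$ and of $\{T_r\}$. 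Hence summing the weight $\tfrac{x^{|\LV|}y^{|\RV|}}{|\LV|!\,|\RV|!}\,\alpha^{\RO}\beta^{\LO}$ over all non-empty NATs factors as a product of two series, one over the trees $T_\ell$ and one over the trees $T_r$.

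Then I would evaluate the two factors. Every tree $\NodeBT{L}{\emptyset}$ has $\RO=0$, so its weight is $\tfrac{x^{|\LV|}y^{|\RV|}}{|\LV|!\,|\RV|!}\,\beta^{\LO}$: the one-vertex NAT contributes $1$, while every other $\NodeBT{L}{\emptyset}$ arises from a unique non-empty NAT $L$ by grafting a fresh root, which turns the root of $L$ into an extra left vertex carrying the maximal left label and lying on the leftmost branch. That grafting raises $\LO$ by one and, on the $x$-side, is exactly $\int_x$ (the new left label being forced), so this factor equals $1+\beta\int_x\GFN^{(1,\beta)}$. By the mirror computation the other factor equals $1+\alpha\int_y\GFN^{(\alpha,1)}$, and multiplying the two yields the asserted fixed-point differential equation for $\GFN^{(\alpha,\beta)}$, the marker of the leftmost branch being coupled to $\int_x$ and that of the rightmost branch to $\int_y$.

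The step I expect to be the main obstacle is conceptual rather than computational: one must realise that $\alpha^{\RO}\beta^{\LO}$ \emph{cannot} simply be inserted into the recursion $\NATB[\NodeBT{L}{R}]=\BNAT(\NATB[L],\NATB[R])$ and iterated along the tree as in Subsection~\ref{differential}, because $\RO$ and $\LO$ are not additive for the general grafting $\BNAT$: grafting two NATs under a new root replaces $\RO$ by $1+\RO$ of the right part, and by $0$ when that part is empty. This is why the splitting must be performed once, at the root; the only remaining care is the bookkeeping of the constant terms, the single-vertex NAT being the $L=R=\emptyset$ instance of both a $T_\ell$-type and a $T_r$-type tree.
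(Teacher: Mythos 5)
Your argument is sound and is in substance the paper's own: the paper's proof consists of the single sentence that one should reweight the pumping function by $\alpha^{\RO(\B)}\beta^{\LO(\B)}$ and ``deduce'' the equation, and the deduction it leaves implicit is exactly the root-level splitting you carry out. Your closing observation --- that $\RO$ and $\LO$ are not additive under iterated grafting, so the reweighting cannot simply be pushed through the recursion and must be resolved once at the root --- is correct and is precisely the point the one-line proof glosses over.

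There is, however, one concrete discrepancy you should not paper over: what your computation actually yields is $\GFN^{(\alpha,\beta)}=\bigl(1+\beta\int_x\GFN^{(1,\beta)}\bigr)\cdot\bigl(1+\alpha\int_y\GFN^{(\alpha,1)}\bigr)$, i.e.\ the stated identity with $\alpha$ and $\beta$ interchanged, so you cannot claim to obtain ``the asserted equation'' verbatim. The source of the mismatch is an inconsistency in the paper itself: the displayed definition of $\GFN^{(\alpha,\beta)}$ pairs $\alpha$ with $\RO$ and $x$ with $|\LV|$, which forces the coefficient of $x$ to be $\beta$ (the unique NAT with one left vertex has $\LO=1$ and $\RO=0$), whereas both the stated equation and the closed form of Proposition~\ref{gen_ser_nat} give that coefficient as $\alpha$. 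The convention consistent with Proposition~\ref{gen_ser_nat} and Theorem~\ref{thm_p} is $\alpha\leftrightarrow\LO$, $\beta\leftrightarrow\RO$; under that reading your derivation gives the statement exactly as printed. So your decomposition, the matching of the binomial coefficients, and the evaluation of the two factors are all correct --- just state explicitly which pairing of $\{\alpha,\beta\}$ with $\{\LO,\RO\}$ you are using, and note that the printed definition and the printed equation cannot both be taken literally.
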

\begin{proof}
We just have to define a new pumping function by setting 
$\Bxy^{(\alpha, \beta)}(\B) = \alpha^{\RO(\B)} \beta^{\LO(\B)} \Bxy(\B)$
and deduce the expected differential equation.
\end{proof}

The solution of the new differential equation is given by 
Proposition~\ref{pro_equ_diff_alpha_beta}.
\begin{prop}\label{gen_ser_nat}
The $(\alpha,\beta)$-exponential generating function for non-ambiguous trees is equal to
\label{pro_equ_diff_alpha_beta}
$$
\GFN^{(\alpha, \beta)}
=
\frac{
	e^{\alpha x + \beta y}
}{
	\left(
		1 - (e^x -1)(e^y - 1)
	\right)^{\alpha+\beta}
}.
$$
\end{prop}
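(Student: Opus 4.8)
The plan is to bootstrap from the already-established closed form of $\GFN$ up to the full $(\alpha,\beta)$-series, by specialising one parameter to $1$ in the differential equation of Proposition~\ref{diff_eq_nat}. Throughout, $\int_x$ and $\int_y$ denote the antiderivatives with vanishing constant term, and I abbreviate $D := 1-(e^x-1)(e^y-1)$, so that the preceding proposition reads $\GFN = e^{x+y}D^{-2}$. The first ingredient is a pair of elementary identities, $D + e^x(e^y-1) = 1 + (e^y-1) = e^y$ and $D + e^y(e^x-1) = 1 + (e^x-1) = e^x$, which together with $\partial_x D = -e^x(e^y-1)$ and $\partial_y D = -e^y(e^x-1)$ give $\partial_y(e^y/D) = e^{x+y}D^{-2} = \GFN$ and symmetrically $\partial_x(e^x/D) = \GFN$; since $e^y/D$ and $e^x/D$ both take the value $1$ at the origin, this yields $1 + \int_y\GFN = e^y/D$ and $1 + \int_x\GFN = e^x/D$. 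Finally, specialising Proposition~\ref{diff_eq_nat} at $\alpha=\beta=1$ recovers the fixed-point equation~\eqref{equ_gfn} for $\GFN$, whose solution in $\QQ[[x,y]]$ is unique (the operator $G\mapsto(1+\int_xG)(1+\int_yG)$ is contracting for the total-degree valuation), so $\GFN^{(1,1)}=\GFN$.

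Next I would pin down the one-parameter series. Setting $\beta=1$ in Proposition~\ref{diff_eq_nat} and using $1+\int_y\GFN = e^y/D$ turns it into the linear fixed-point equation $\GFN^{(\alpha,1)} = (e^y/D)\cdot\bigl(1+\alpha\int_x\GFN^{(\alpha,1)}\bigr)$; since $\int_x$ strictly raises the $x$-valuation, this has a unique solution in $\QQ[\alpha][[x,y]]$, so it suffices to exhibit one. The natural ansatz is $F_{\alpha,1}:=e^{\alpha x+y}D^{-(\alpha+1)}$, and the key verification is that $\partial_x(e^{\alpha x}D^{-\alpha}) = \alpha e^{\alpha x}D^{-\alpha-1}\bigl(D + e^x(e^y-1)\bigr) = \alpha e^{\alpha x+y}D^{-\alpha-1} = \alpha F_{\alpha,1}$, using the first of the two identities above. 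As $e^{\alpha x}D^{-\alpha}$ equals $1$ at $x=0$, this says $1+\alpha\int_x F_{\alpha,1} = e^{\alpha x}D^{-\alpha}$, and multiplying by $e^y/D$ returns $F_{\alpha,1}$; hence $\GFN^{(\alpha,1)} = e^{\alpha x+y}D^{-(\alpha+1)}$. By the mirror computation (specialising instead at $\alpha=1$ and using $1+\int_x\GFN=e^x/D$), $\GFN^{(1,\beta)} = e^{x+\beta y}D^{-(\beta+1)}$.

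To conclude, I substitute these two expressions into the full equation of Proposition~\ref{diff_eq_nat}: its right-hand side becomes completely explicit, namely $\bigl(1+\alpha\int_x F_{\alpha,1}\bigr)\cdot\bigl(1+\beta\int_y F_{1,\beta}\bigr) = \bigl(e^{\alpha x}D^{-\alpha}\bigr)\cdot\bigl(e^{\beta y}D^{-\beta}\bigr) = e^{\alpha x+\beta y}D^{-(\alpha+\beta)}$, which is exactly the claimed formula for $\GFN^{(\alpha,\beta)}$. I expect no serious obstacle: the only delicate points are the two algebraic identities $D + e^x(e^y-1) = e^y$ and $D + e^y(e^x-1) = e^x$, which are what make the ansatz close up, and the small amount of bookkeeping needed to recast the $\beta=1$ (resp. $\alpha=1$) specialisation as a genuine fixed-point equation so that the valuation uniqueness argument applies; everything else is routine differentiation.
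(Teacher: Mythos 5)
Your proof is correct and follows exactly the route the paper indicates: the proof is omitted there, but Proposition~\ref{gen_ser_nat} is presented as ``the solution of'' the differential equation of Proposition~\ref{diff_eq_nat}, and you carry out precisely that verification, with the identities $D+e^x(e^y-1)=e^y$ and $D+e^y(e^x-1)=e^x$ making the ansatz close up and the valuation (contraction) argument supplying the uniqueness needed at each stage of the bootstrap from $\GFN^{(1,1)}$ to $\GFN^{(\alpha,1)}$, $\GFN^{(1,\beta)}$ and finally $\GFN^{(\alpha,\beta)}$. I see no gaps.
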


\subsection{Bijection with some labelled ordered trees} \label{bijectionslabelledtree} 

In what follows, we will use \emph{rooted ordered trees}. These are trees such that each node has an ordered (possibly
empty) list of children. We draw the children from left to right on the pictures.

Note that the solution of Proposition \ref{pro_equ_diff_alpha_beta} can be rewritten as : 

\begin{equation}\label{eq:gfnab}
\GFN^{(\alpha, \beta)} = e^{\alpha x} e^{\beta y} e^{-\alpha\ln(1-(e^x -1)(e^y - 1))} e^{-\beta \ln(1-(e^x -1)(e^y - 1))}.
\end{equation}
The purpose of this subsection is to explain this expression combinatorially. 
Let us first describe objects ``naturally'' enumerated by the RHS of \eqref{eq:gfnab}.
 We recall that $e^x$ is the exponential 
generating series of sets and  $-\ln(1-x)$ is the exponential generating series 
of cycles. The objects can be described as $4$-tuples consisting of two sets of elements and two sets of cycles whose elements are pairs of non empty sets. Let us denote by $\ft$ the set of such $4$-tuples.

We first link non-ambiguous trees with ordered trees. We need the following 
definition:

\begin{defi} \label{def_hook}
Let $B$ be a binary tree and $v$ one of his node. The \emph{hook} of a vertex
 $v$ is the union of $\{v\}$, its leftmost branch and its rightmost branch. There is a unique way to partition the vertices in hooks. The number of hooks in such a partition is the \emph{hook number of the tree}.
\end{defi}

\begin{rque}
We can obtain recursively the unique partition of the preceding definition by 
extracting the root's hook and iterating the process on each tree of the remaining forest.
\end{rque}

\begin{exple} On the left part of Figure \ref{fig_nat}, we represented in red
  the hook of \textcolor{blue}{10}. The partition of vertices in hooks is obtained by
  removing the dotted edges. The hook number of the tree is $8$.
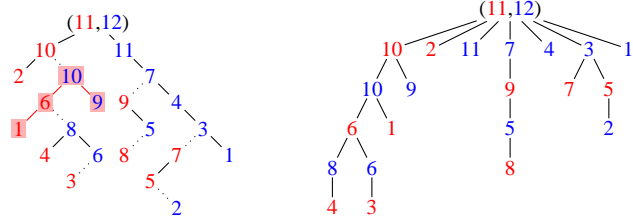
\begin{figure} 
\begin{center}
\begin{tikzpicture}[scale=0.35]
\node (r) at (0,8) {(\textcolor{red}{11},\textcolor{blue}{12})};
\node (11b) at (1,7) {\textcolor{blue}{11}};
\node[fill=red!30] (10b) at (-1,6) {\textcolor{blue}{10}};
\node[fill=red!30] (9b) at (0,5) {\textcolor{blue}{9}};
\node (8b) at (-1,4) {\textcolor{blue}{8}};
\node (7b) at (2,6) {\textcolor{blue}{7}};
\node (6b) at (0,3) {\textcolor{blue}{6}};
\node (5b) at (2,4){\textcolor{blue}{5}};
\node (4b) at (3,5) {\textcolor{blue}{4}};
\node (3b) at (4,4){\textcolor{blue}{3}};
\node (2b) at (3,1){\textcolor{blue}{2}};
\node (1b) at (5,3){\textcolor{blue}{1}};
\node (10r) at (-2,7){\textcolor{red}{10}};
\node (9r) at (1,5){\textcolor{red}{9}};
\node (8r) at (1,3){\textcolor{red}{8}};
\node (7r) at (3,3){\textcolor{red}{7}};
\node[fill=red!30] (6r) at (-2,5){\textcolor{red}{6}};
\node (5r) at (2,2){\textcolor{red}{5}};
\node (4r) at (-2,3){\textcolor{red}{4}};
\node (3r) at (-1,2){\textcolor{red}{3}};
\node (2r) at (-3,6){\textcolor{red}{2}};
\node[fill=red!30] (1r) at (-3,4){\textcolor{red}{1}};
\draw (2r)--(10r)--(r)--(11b)--(7b)--(4b)--(3b)--(1b);
\draw [dotted] (7b)--(9r);
\draw (9r)--(5b);
\draw [dotted] (5b)--(8r);
\draw [dotted] (3b)--(7r);
\draw (7r)--(5r);
\draw [dotted] (5r)--(2b);
\draw [dotted] (10r)--(10b);
\draw[red] (9b)--(10b)--(6r)--(1r);
\draw (6b)--(8b)--(4r);
\draw [dotted] (3r)--(6b);
\draw [dotted] (6r)--(8b);
\end{tikzpicture}
\hspace{1cm}
\begin{tikzpicture}[scale=0.35,sibling distance=1.5cm]
\node{(\textcolor{red}{11},\textcolor{blue}{12})}  
   child{node{\textcolor{red}{10}}
   		child{node{\textcolor{blue}{10}}
   				child{node{\textcolor{red}{6}}
   				    child{node{\textcolor{blue}{8}}
   				        child{node{\textcolor{red}{4}}}}
   				    child{node{\textcolor{blue}{6}}
   				    child{node{\textcolor{red}{3}}}}
   				    }
   				child{node{\textcolor{red}{1}}}}
   	    child{node{\textcolor{blue}{9}}}}
   child{node{\textcolor{red}{2}}}
   child{node{\textcolor{blue}{11}}}
   child{node{\textcolor{blue}{7}}
       child{node{\textcolor{red}{9}}
           child{node{\textcolor{blue}{5}}
               child{node{\textcolor{red}{8}}}}}}
   child{node{\textcolor{blue}{4}}}
   child{node{\textcolor{blue}{3}}
child{node{\textcolor{red}{7}}}
       child{node{\textcolor{red}{5}}[scale=0.75]
           child{node{\textcolor{blue}{2}}}}}
   child{node{\textcolor{blue}{1}}}
   	;
\end{tikzpicture}
\caption{Hooks on a non-ambiguous tree and associated ordered tree}\label{fig_nat}
\end{center}
\end{figure}
\end{exple}

We denote by $\NOT$ 
the set of ordered trees $O$ such that:
\begin{itemize}
\item Each vertex, except the root, is labelled and coloured (in red or blue). The root is labelled by a red label and a blue label, both maximal.
\item The root has red and blue children, the red children being on the left side of blue children. Blue (resp. red) vertices have only red (resp. blue) children.
\item The labels of red (resp. blue) descendants or right siblings of a red 
(resp. blue) vertex $v$ are smaller than the label of $v$.
\end{itemize}
\begin{prop} \label{bij_pair_ordered}
The set of non-ambiguous trees $\NAT$ on $n$ nodes is in bijection with the 
set of trees of $\NOT$ on $n$ nodes. This bijection is denoted by $\xi$.
\end{prop}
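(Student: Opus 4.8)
The plan is to build the bijection $\xi$ recursively, mirroring the recursive decomposition $B = \NodeBT{L}{R}$ of a binary tree together with the hook decomposition of Definition~\ref{def_hook}. Given a NAT $T$ of shape $B$, the root's hook consists of the root, its leftmost branch (a chain of left vertices $u_1 \succ u_2 \succ \cdots$) and its rightmost branch (a chain of right vertices $v_1 \succ v_2 \succ \cdots$), where $\succ$ records that labels strictly decrease going down by the ancestor condition of Definition~\ref{def_nat}. Hanging off each $u_i$ there is a right subtree, and off each $v_j$ a left subtree; applying $\xi$ recursively to these subtrees and to the ``dangling'' subtrees and attaching the results as an ordered list of children of the root --- red children (coming from the leftmost branch) first, then blue children (from the rightmost branch) --- produces an ordered tree. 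First I would check that this output lies in $\NOT$: the colour-alternation rule (blue vertices have only red children and vice versa) is exactly the statement that a right vertex of $B$ sits below a left vertex of the hook above it and conversely, and the inequality condition on descendants and right siblings of a monochromatic vertex $v$ is precisely the conjunction of the ancestor inequality inside a branch and the fact that, after renumbering as in the discussion preceding \eqref{Equation:BNat-binom}, the labels used along one branch are a decreasing sequence.

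Next I would exhibit the inverse map. Given $O \in \NOT$, read off the root's red children $r_1, r_2, \ldots$ (in tree order) and blue children $b_1, b_2, \ldots$; reconstruct the leftmost branch of the binary tree as the chain of left vertices with labels $r_1 > r_2 > \cdots$ — note the condition on right siblings forces these labels to be decreasing, so the chain is determined — and symmetrically the rightmost branch from the $b_j$; then recurse on each subtree hanging below an $r_i$ or $b_j$. One must argue this is well defined, i.e. that the label sets behave correctly: the renumbering used when passing from $T$ to $T_L, T_R$ in the excerpt is invertible once one records which labels of $\{1,\dots,|\LV(T)|\}$ were used on the left part, and that data is faithfully carried by the labels appearing in $O$. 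Checking that the two maps are mutually inverse is then a routine induction on $n$, using that both constructions strip off the root's hook / the root's children list in the same way. That $\xi$ preserves the number of nodes is immediate since every non-root vertex of $T$ becomes exactly one non-root vertex of $O$ and vice versa.

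The main obstacle, I expect, is bookkeeping the labels correctly under the recursion: in $T$ the left (resp. right) labels form a single global system $1,\dots,|\LV(T)|$, whereas the recursive call produces subtrees $T_L, T_R$ whose labels have been renumbered to start at $1$, and the ordered tree $O$ must encode the ``un-renumbering'' data. The clean way to handle this is to work with NATs whose labels are allowed to be arbitrary finite subsets of $\mathbb{N}$ (a ``standardization-free'' formulation), prove the bijection in that generality where the recursion is transparent, and only at the end restrict to the standard labelling $\{1,\dots,|\LV(T)|\}$ and $\{1,\dots,|\RV(T)|\}$; the monotonicity conditions in both Definition~\ref{def_nat} and the definition of $\NOT$ are insensitive to standardization, so nothing is lost. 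A secondary point to get right is the placement rule ``red children on the left of blue children'' together with the claim that right siblings of a red vertex carry smaller red labels: one has to be careful that within the list of red children, the order is the one induced by the branch (topmost $=$ leftmost $=$ largest label), and likewise for the recursively produced subtrees, so that the sibling inequalities in $\NOT$ match the ancestor inequalities in the NAT. Once the label-tracking is set up, verifying all three defining conditions of $\NOT$ and the inversion is mechanical.
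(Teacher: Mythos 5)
Your high-level plan is the same as the paper's: the hook of the root becomes the list of children of the root of the ordered tree, and one recurses on the subtrees hanging off the hook. The gap is in the recursive step itself. You propose to apply $\xi$ to the right subtree $S_i$ hanging off each $u_i$ and to attach the resulting ordered tree below the root (or below $u_i$). But $\xi(S_i)$ is a single ordered tree rooted at the right child $w_1$ of $u_i$, and its root has as children \emph{both} the leftmost and the rightmost branch of $w_1$. So the blue vertex $w_1$ ends up with blue children (the rest of its rightmost branch), which violates the condition of $\NOT$ that a non-root blue vertex has only red children; as written, your map does not land in the target set. Concretely, on the tree of Figure~\ref{fig_nat}, your rule gives red $10$ the single child blue $10$, which in turn has children red $6$, red $1$ \emph{and} blue $9$; the correct ordered tree instead gives red $10$ the two children blue $10$ and blue $9$, and blue $10$ only the children red $6$ and red $1$. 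The missing idea is that the recursion does not go through $\xi$ itself but through the asymmetric rule: the children in the ordered tree of a vertex $v$ that is a left child of the NAT are the \emph{entire} maximal chain of right descendants of $v$ (the rightmost branch of the subtree hanging off $v$, laid out as siblings), and symmetrically for right children. This is exactly what forces the colour alternation and what makes the hook of $v$ reappear as ``the children of $v$ together with the right siblings of $v$''. Your inverse map has the mirror-image defect: the subtree of $O$ rooted at a red child $r_i$ is not itself an element of $\NOT$ (its root has only blue children), so one cannot recurse with $\xi^{-1}$ on it; one must first fold the children of $r_i$ back into a right chain and only then recurse on the forests hanging below them.

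A smaller point: once the map is set up correctly, the label bookkeeping you worry about disappears. The bijection keeps every label unchanged --- no standardization of $T_L$, $T_R$ is ever performed --- and the descendant/right-sibling inequalities defining $\NOT$ translate verbatim into the ancestor inequalities of Definition~\ref{def_nat}, since an ancestor relation between same-coloured vertices in the NAT becomes exactly a descendant-or-right-sibling relation in the ordered tree. Your ``standardization-free'' detour is a symptom of the off-by-one-level recursion rather than a necessary device.
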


\begin{proof}
Let us consider a non-ambiguous tree $\N$ and construct an ordered tree 
$\xi(\N)=\O$. The root of $\N$ will be associated to the root of $\O$. 
Starting from the root $r$ of the ordered tree, the red (resp. blue) 
children of $r$ are the set of left (resp. right) descendants of the root 
of $\N$. The expected ordered tree is then obtained recursively by the 
following rule : if a node $v$ in the ordered tree is a left (resp. right) 
child in $\N$, then its children in the ordered tree is the set of right 
(resp. left) descendants of $v$ in $\N$, with every right (resp. left) child
on the right side of its parent. 

We can reconstruct recursively the non-ambiguous tree associated to such 
an ordered tree, by reversing the process from the children of the root to 
the leaves in the ordered tree.
\end{proof}

\begin{rque}
Let us remark that the hook of a vertex $v$, different from the root in 
the non-ambiguous tree, can be read off from the ordered trees : it consists 
in the children of $v$ in the ordered tree and the siblings of $v$ on 
the right side of $v$ in the ordered tree.
\end{rque}

\begin{exple}
The ordered tree associated to the non-ambiguous tree on the left part of Figure \ref{fig_nat} is represented on the right part of the same figure.

\begin{figure}
\begin{center}

\end{center}
\end{figure}
\end{exple}

\begin{prop}
The set of non-ambiguous trees $\NAT$ is in bijection with pairs of 2-coloured words, 
with blue letters on $\{1, \ldots, |\RV|\}$ and red letters on $\{1, \ldots, |\LV|\}$,
where each letter appear exactly once (in the first word or in the second word),
letters in blocks of the same colors are decreasing,
the first (resp. second) word ends by a red (resp. blue) letter
and $\RV$ (resp. $\LV$) is the set of right (resp. left) children in the non-ambiguous tree.
This bijection is denoted by $\xi \circ \Omega$.
Moreover, the pairs of 2-coloured words are exactly described by the previous $4$-tuples.
\end{prop}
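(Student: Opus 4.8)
The plan is to factor the desired bijection as $\N\mapsto\xi(\N)\in\NOT$ followed by a bijection $\Omega$ from $\NOT$ onto the set of word‑pairs, and then to identify those word‑pairs with the $4$‑tuples of $\ft$ read off the right‑hand side of~\eqref{eq:gfnab}. First I would unwind the $(\alpha,\beta)$‑equation of Proposition~\ref{diff_eq_nat}: the product shape $\GFN^{(\alpha,\beta)}=\bigl(1+\alpha\int_x\GFN^{(\alpha,1)}\bigr)\bigl(1+\beta\int_y\GFN^{(1,\beta)}\bigr)$ already expresses a NAT as an \emph{ordered} pair of two independent gadgets, and a short computation using Proposition~\ref{pro_equ_diff_alpha_beta} identifies the left factor as $1+\alpha\int_x\GFN^{(\alpha,1)}=e^{\alpha x}\bigl(1-(e^x-1)(e^y-1)\bigr)^{-\alpha}$, and symmetrically the right factor as $e^{\beta y}\bigl(1-(e^x-1)(e^y-1)\bigr)^{-\beta}$. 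This is exactly the bookkeeping that cuts a $4$‑tuple of $\ft$ into its ``red half'' (a set of red elements, plus a set of cycles of pairs of non‑empty bicoloured sets, with $\alpha$ counting the set size plus the number of cycles) and its mirror ``blue half''. So it remains to realise each half as one of the two words, and to check that this realisation is the one induced by $\xi$.

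For the combinatorial map $\Omega$, given $O=\xi(\N)\in\NOT$ I would use the hook decomposition of Definition~\ref{def_hook} together with the Remark identifying, inside $O$, the hook of a non‑root vertex as ``its children together with its right siblings''. Each hook is then read as one maximal red run followed by one maximal blue run (or the reverse), the entries inside a run being automatically in decreasing order by the last axiom defining $\NOT$; this run‑pair is the ``pair of non‑empty sets'' sitting inside a cycle. Reading, in a fixed depth‑first order, the red children of the root and, underneath each of them, the chain of hooks reached by the alternating ``children / left‑or‑right descendant'' rule of $\xi$, produces the first word $W_1$: its terminal red run is precisely the set of red children of the root (so $W_1$ is empty or ends with a red letter), its earlier blocks are the bicoloured runs of the hooks lying below, and the ``decreasing inside a block'' condition is exactly the ancestor/right‑sibling condition of $\NOT$. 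The blue children of the root and the hooks below them give $W_2$ in the mirror way, ending with a blue letter; since each red (resp.\ blue) NAT‑label is a left (resp.\ right) vertex, the alphabets are $\{1,\dots,|\LV|\}$ and $\{1,\dots,|\RV|\}$ and every label occurs once in $W_1$ or in $W_2$. Conversely, a legal pair $(W_1,W_2)$ is cut at the last colour change to recover the terminal monochromatic run (the $e^x$, resp.\ $e^y$, set) and the preceding bicoloured blocks; the ``transformation fondamentale'' of Foata turns those blocks into a set of cycles; rebuilding the corresponding hooks and gluing them back along the $\xi$‑rule reconstructs $O$, hence $\N$.

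The main obstacle is this cycle‑cutting step and its inverse. I must fix a canonical way to start reading each cycle of bicoloured pairs — rotating so that the block containing the largest red entry (equivalently, the one attached closest to the root of $O$) comes first, then ordering the cycles among themselves by their largest entry — and check that, after concatenating the cycles and appending the terminal monochromatic run, the cycle boundaries can be read back off $W_1$ as the left‑to‑right maxima among its red letters (and likewise for $W_2$), so that the number of cycles recovered matches $\RO$, resp.\ $\LO$. Once this Foata‑type lemma is in place, the remaining work is routine: checking that the depth‑first order underlying $\Omega$ is the one produced by this canonical cutting, that $\Omega$ and the reconstruction are mutually inverse, and handling the degenerate conventions for empty words (when $\LO=0$ or $\RO=0$, and for the one‑vertex NAT).
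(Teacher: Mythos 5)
Your proposal follows essentially the same route as the paper: read the ordered tree $\xi(\N)\in\NOT$ by a post-order-type traversal of the subtrees hanging from the red (resp.\ blue) children of the root to get the two words, invert by scanning each word from right to left and re-attaching letters, and cut the words into the $4$-tuple via record maxima among the red (resp.\ blue) letters. One detail to fix: your claim that the terminal red run of $W_1$ is ``precisely the set of red children of the root,'' and the corresponding recipe ``cut at the last colour change to recover the $e^x$-set,'' are not correct in general --- a non-leaf red child of the root can land inside that terminal run (it does in the paper's running example, where the first set of the $4$-tuple is $\{2\}$ but the terminal red run is $\{10,2\}$); the set component must consist only of the \emph{childless} children of the root, and the block boundaries are the right-to-left records among red letters, not the last colour change. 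Since you yourself identify the record-maxima criterion as the delimiting rule and flag the cycle-cutting as the main point to nail down, this is a repairable slip rather than a structural gap, and the rest (decreasingness of runs from the $\NOT$ axioms, the word ending in a red/blue letter, matching the cycle count with $\RO$ and $\LO$) is in order.
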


\begin{proof}[(sketch)] From $T \in \NOT$, we obtain the two words $\Omega(T)=(w_1,w_2)$ by a post-order traversal visit of the descendant of the red (resp. blue) children of the root for $w_1$ (resp. $w_2$). The injectivity of $\Omega$ can be shown \textit{ad absurdum}.

From such a word, we can build back recursively the associated ordered trees by reading each word from right to left and adding, for each new letter $l$, a node labelled by $l$ to the left of the closest ancestor of the current position whose label is of the same colour as $l$ and  smaller than $l$.

The consecutive maximal red (rep. blue) elements from right to left in the first (resp. second) word correspond to the children of the root in the ordered tree. The first (resp. second) set of the $4$-tuple can be defined as the set of blue (resp. red) children of the root in the ordered tree. Then, each remaining subword, corresponding to one child of the root and its descendants in the ordered tree, contains both blue and red elements, the rightmost letter corresponding to the child of the root. Each of these subwords can be viewed as a blue (resp. red) cycle, as the child of the root is the biggest blue (resp. red) element in the subword and can be found again. This cycle is made of alternating sets of blue and red elements, corresponding to right and left vertices in the non-ambiguous tree, which can be joined in pairs of non empty sets, giving the two set of cycles of the $4$-tuple.
\end{proof}

\begin{exple}
The pair of words associated with trees of Figure \ref{fig_nat} is (\textcolor{red}{4}\,\textcolor{blue}{8}\ \textcolor{red}{3}\,\textcolor{blue}{6}\,\textcolor{red}{6}\,\textcolor{red}{1}\,\textcolor{blue}{10}\,\textcolor{blue}{9}\,\textcolor{red}{10}\,\textcolor{red}{2}, \textcolor{blue}{11}\,\textcolor{red}{8}\,\textcolor{blue}{5}\,\textcolor{red}{9}\,\textcolor{blue}{7}\,\textcolor{blue}{4}\,\textcolor{red}{7}\ \textcolor{blue}{2}\,\textcolor{red}{5}\,\textcolor{blue}{3}\,\textcolor{blue}{1}). The associated $4$-tuple is: (\{\textcolor{red}{2}\}, \{\textcolor{blue}{1},\textcolor{blue}{4},\textcolor{blue}{11}\}, \{\textcolor{red}{(}\{\textcolor{red}{10}\ \textcolor{red}{4}\,\textcolor{blue}{8}\}\{\textcolor{red}{3}\textcolor{blue}{6}\}\{\textcolor{red}{6}\textcolor{red}{1}\textcolor{blue}{10}\,\textcolor{blue}{9}\}\textcolor{red}{)}\}, \{\textcolor{blue}{(}\{\textcolor{red}{8}\textcolor{blue}{5}\}\{\textcolor{red}{9}\textcolor{blue}{7}\}\textcolor{blue}{)},\textcolor{blue}
 {(}\{\textcolor{red}{7}\textcolor{blue}{2}\}\{\textcolor{red}{5}\textcolor{blue}{3}\}\textcolor{blue}{)}).
\end{exple}

\begin{rque}
The bijection $\Omega$ is similar to the  ``zigzag'' bijection of \cite{SteiWill}.
\end{rque}

We may derive from our construction a bijective proof of the following enumeration formula.

\begin{thm}\label{thm_p}
The $(\alpha,\beta)$-analogue of the number of non empty non-ambiguous trees with $w$ left vertices and $h$ right vertices is given by:
\begin{equation}
\NAT_{w,h} = \sum_{p \geq 1}
    (p-1)!\cdot(p-1)^{(\alpha+\beta)}\cdot
    S_{2,\alpha}(w+1, p) S_{2,\beta}(h+1, p)
\end{equation}
where $p^{(q)}$ is the rising factorial, and
$S_{2,q}$ denotes the $q$-analogue of the Stirling numbers of the second kind
such that, if we consider a set partition,
$q$ counts the number of elements different from $1$ in the subset containing $1$.
In this positive summation expression, each summand corresponds 
to the number of NATs with prescribed size, and whose number of hooks equals $p$.
\end{thm}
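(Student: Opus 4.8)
The plan is to extract the formula of Theorem~\ref{thm_p} directly from the closed form for $\GFN^{(\alpha,\beta)}$ in Proposition~\ref{gen_ser_nat}, reading off the coefficient of $\frac{x^w}{w!}\frac{y^h}{h!}$ and matching it term-by-term against the $q$-Stirling numbers. Concretely, I would start from
\begin{equation}
\GFN^{(\alpha,\beta)} = \frac{e^{\alpha x + \beta y}}{\bigl(1-(e^x-1)(e^y-1)\bigr)^{\alpha+\beta}},
\end{equation}
expand the denominator as a binomial series $\sum_{p\ge 0}\binom{-(\alpha+\beta)}{p}(-1)^p(e^x-1)^p(e^y-1)^p = \sum_{p\ge0}\frac{(\alpha+\beta)^{(p)}}{p!}(e^x-1)^p(e^y-1)^p$, where $(\alpha+\beta)^{(p)}$ is the rising factorial, and then separate the $x$- and $y$-dependence. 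The $x$-part is $e^{\alpha x}(e^x-1)^p$ and the $y$-part is $e^{\beta y}(e^y-1)^p$, so the whole series factors as a sum over $p$ of a product of a pure-$x$ series and a pure-$y$ series.

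The key algebraic identity I would then need is that $e^{\alpha x}(e^x-1)^p$ is the exponential generating function (in $x$) of $p!\, S_{2,\alpha}(\,\cdot\,+1,p)$, i.e.
\begin{equation}
e^{\alpha x}\,(e^x-1)^p = \sum_{w\ge 0} S_{2,\alpha}(w+1,p)\cdot p!\cdot \frac{x^w}{w!}.
\end{equation}
This is where the combinatorial definition of $S_{2,q}$ given in the statement does the work: $(e^x-1)^p/p!$ is the EGF of set partitions of a $w$-element set into $p$ labelled blocks, and the extra factor $e^{\alpha x}$ adjoins a marked element (the ``$1$'') to one distinguished block, with $\alpha$ recording the number of the $w$ elements that land in that block — precisely the statistic defining $S_{2,\alpha}$ on a set of size $w+1$. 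I would verify this either by the standard EGF manipulation (differentiating, or using the exponential formula) or by a short direct bijective argument on the indexing sets. Granting this, multiplying the $x$-factor $p!\,S_{2,\alpha}(w+1,p)$, the $y$-factor $p!\,S_{2,\beta}(h+1,p)$, and the coefficient $\frac{(\alpha+\beta)^{(p)}}{p!}$ coming from the binomial expansion, and noting $\frac{1}{p!}\cdot p!\cdot p! = p!$ while $(\alpha+\beta)^{(p)}$ must be reorganised as $(p-1)!\cdot(p-1)^{(\alpha+\beta)}$...

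Here the main obstacle is reconciling the two different ``rising factorial'' normalisations: the binomial series naturally produces $(\alpha+\beta)^{(p)} = (\alpha+\beta)(\alpha+\beta+1)\cdots(\alpha+\beta+p-1)$, whereas the statement is written with $(p-1)^{(\alpha+\beta)}$, a rising factorial in the *exponent* $\alpha+\beta$ based at $p-1$. For integer $\alpha,\beta$ these agree up to the combinatorial factor: one checks $(\alpha+\beta)^{(p)} = \frac{(\alpha+\beta+p-1)!}{(\alpha+\beta-1)!}$ and matches this against $(p-1)!\cdot(p-1)^{(\alpha+\beta)}$ by a careful factorial bookkeeping, being attentive to the shift $p\ge1$ versus $p\ge0$ (the $p=0$ term only contributes to the empty tree and must be discarded, consistent with summing over $\NAT^*$). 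Finally, to justify the last sentence of the theorem — that the $p$-th summand counts NATs whose hook number is $p$ — I would invoke the bijection $\xi$ of Proposition~\ref{bij_pair_ordered} together with the remark identifying hooks of a NAT with children-plus-right-siblings in the ordered tree, and the $4$-tuple description: in the $4$-tuple the two sets of cycles have total length $p-1$ (one cycle per non-root hook beyond the two ``root branches''), so the generating-function term indexed by $p$ is exactly the contribution of hook number $p$. Pinning down this last correspondence precisely — matching the exponent $p$ in the binomial expansion to the hook count through the whole chain $\GFN^{(\alpha,\beta)} \to \ft \to \NOT \to \NAT$ — is the subtlest bookkeeping step, though conceptually it is forced by the structure already set up in Section~\ref{bijectionslabelledtree}.
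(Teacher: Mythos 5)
Your overall route --- expanding the closed form of Proposition~\ref{gen_ser_nat} as a binomial series and recognising $q$-Stirling numbers in the factors $e^{\alpha x}(e^x-1)^m$ --- is viable, and it is essentially the generating-function shadow of the paper's (omitted) bijective argument via the $4$-tuples of Subsection~\ref{bijectionslabelledtree}: the two sets of cycles in a $4$-tuple are exactly what $\bigl(1-(e^x-1)(e^y-1)\bigr)^{-(\alpha+\beta)}$ enumerates. But your key identity is off by one. Since the block containing the special element $1$ is itself one of the $p$ blocks, the correct statement is
\begin{equation*}
e^{\alpha x}(e^x-1)^{p-1}\;=\;\sum_{w\ge 0}(p-1)!\,S_{2,\alpha}(w+1,p)\,\frac{x^w}{w!},
\end{equation*}
not the version with exponent $p$ and factor $p!$ that you wrote. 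Consequently the binomial index $m$ in $\sum_{m\ge0}\frac{(\alpha+\beta)^{(m)}}{m!}(e^x-1)^m(e^y-1)^m$ must be matched with hook number $p=m+1$, and your claim that the $m=0$ term ``only contributes to the empty tree'' is false: that term is $e^{\alpha x+\beta y}$, which contributes $\alpha^{w}\beta^{h}$ in every bidegree and accounts precisely for the one-hook NATs (the chains), i.e.\ the $p=1$ summand of the theorem.

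Second, the ``careful factorial bookkeeping'' you defer does not close as asserted. Carrying out the corrected expansion yields the coefficient $(p-1)!\,(\alpha+\beta)^{(p-1)}$ with $(\alpha+\beta)^{(p-1)}=(\alpha+\beta)(\alpha+\beta+1)\cdots(\alpha+\beta+p-2)$, and this is \emph{not} equal to the printed $(p-1)!\,(p-1)^{(\alpha+\beta)}$: both contain $(p+\alpha+\beta-2)!$, but the remaining factors $(p-1)!/(\alpha+\beta-1)!$ and $(p-1)!/(p-2)!=p-1$ differ unless $\alpha+\beta=p-1$. A one-line check at $w=h=1$, $\alpha=\beta=1$ exposes this: there are $3$ such NATs (with weights $\alpha\beta$, $\alpha$, $\beta$); the corrected formula gives $0!\cdot1\cdot\alpha\beta+1!\cdot(\alpha+\beta)\cdot1\cdot1=3$ at $\alpha=\beta=1$, while the statement as printed gives $2$ because its $p=1$ term carries $0^{(\alpha+\beta)}=0$. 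So either you must conclude that the theorem's coefficient is a misprint for $(\alpha+\beta)^{(p-1)}$ (which is what the expansion, and the cycle count in the $4$-tuples, actually produce), or your matching step fails; as written, the proposal papers over exactly the point where the computation and the printed statement disagree. (There is also a harmless $\alpha\leftrightarrow\beta$ bookkeeping issue in which Stirling factor carries which weight, already visible on the one-hook chains, which you should fix in passing.)
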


We conclude this subsection with following result on binary trees.
The corresponding integer series appears as \cite[A127157]{oeis} in OEIS.
\begin{prop} The set of binary trees on $n$ vertices with hook number $p$ is in bijection with the number of ordered trees on $n+1$ vertices having $p$ vertices being the parent of at least a leaf.
\end{prop}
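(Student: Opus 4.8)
The plan is to show the two sets have the same cardinality by computing their generating functions; a bijection can then be read off, or produced directly (see the remark at the end). Introduce the bivariate series
$$\mathcal{B}(z,u)=\sum_{T\in\BT} z^{n(T)}u^{\mathrm{hk}(T)}
\qquad\text{and}\qquad
\mathcal{O}(z,u)=\sum_{\tau} z^{n(\tau)}u^{\mathrm{pl}(\tau)},$$
where $n(\cdot)$ is the number of vertices, the first sum runs over binary trees (with $\mathrm{hk}(\emptyset)=0$), the second over rooted ordered trees, $\mathrm{hk}$ is the hook number, and $\mathrm{pl}(\tau)$ is the number of vertices of $\tau$ having at least one leaf among their children. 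Then $[z^{n+1}u^{p}](z\mathcal{B})$ is the number of binary trees on $n$ vertices with hook number $p$, while $[z^{n+1}u^{p}]\mathcal{O}$ is the number of ordered trees on $n+1$ vertices with exactly $p$ parent-of-a-leaf vertices, so the proposition is equivalent to the single identity $z\mathcal{B}=\mathcal{O}$. I would prove it by deriving a functional equation for each series, checking that after clearing denominators both reduce to the same algebraic equation, and invoking uniqueness of the power-series solution.

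For $\mathcal{B}$ I would use the hook decomposition itself. Peeling off the root's hook and recursing on the remaining forest exhibits a non-empty binary tree as the data of the lengths $(s,t)$ of the leftmost and rightmost branches issued from the root, together with an ordered list of $s+t$ binary trees (possibly empty), namely the right subtree of each vertex of the leftmost branch and the left subtree of each vertex of the rightmost branch. Since the hook has $s+t+1$ vertices and contributes exactly one to the hook number, this gives
$$\mathcal{B}=1+\sum_{s,t\ge 0}u\,z^{s+t+1}\mathcal{B}^{s+t}=1+\frac{uz}{(1-z\mathcal{B})^{2}},$$
equivalently $(X-z)(1-X)^{2}=uz^{2}$ with $X:=z\mathcal{B}$, a power series with no constant term.

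For $\mathcal{O}$ I would decompose an ordered tree as a root carrying an ordered sequence of subtrees, the key remark being that the root is counted by $\mathrm{pl}$ exactly when at least one of those subtrees is a single vertex. Accounting for this ``at least one'' condition — a single-vertex subtree has weight $z$, so sequences of subtrees avoiding single vertices are enumerated by $1/(1-(\mathcal{O}-z))$ — leads, after grouping terms, to
$$\mathcal{O}=\frac{z(1-u)}{1-\mathcal{O}+z}+\frac{zu}{1-\mathcal{O}},$$
and clearing denominators collapses this to $(\mathcal{O}-z)(1-\mathcal{O})^{2}=uz^{2}$ — the same equation, again with no constant term. Since $(X-z)(1-X)^{2}=uz^{2}$, rewritten $X=z+uz^{2}/(1-X)^{2}$, has a unique formal power series solution without constant term, we conclude $z\mathcal{B}=\mathcal{O}$, hence the enumeration identity. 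The step I expect to be most delicate is the ordered-tree bookkeeping: isolating the ``some child is a leaf'' contribution correctly and verifying that the resulting rational equation really does collapse to the binary side's cubic. For a genuinely bijective proof one can instead inflate the tree of hooks of $T$ into an ordered tree on $n+1$ vertices, turning each hook into a small fan whose centre is a parent-of-a-leaf vertex; there the delicate point is to encode the two arm-lengths $(s,t)$ of each hook inside its fan so that the construction is reversible.
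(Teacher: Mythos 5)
Your proposal is correct, and since the paper (an extended abstract) omits the proof of this proposition entirely, there is no official argument to measure it against; I checked your computations instead. The hook decomposition of a non-empty binary tree into the root's hook of arm-lengths $(s,t)$ plus an ordered forest of $s+t$ (possibly empty) binary trees is exactly the recursive description of the hook partition given in the paper's remark, and it does yield $\mathcal{B}=1+uz/(1-z\mathcal{B})^2$, i.e.\ $(X-z)(1-X)^2=uz^2$ for $X=z\mathcal{B}$. On the ordered-tree side, your inclusion--exclusion on ``some child of the root is a single vertex'' gives $\mathcal{O}=z(1-u)/(1-\mathcal{O}+z)+zu/(1-\mathcal{O})$, and clearing denominators does collapse this to $(\mathcal{O}-z)(1-\mathcal{O})^2=uz^2$; I verified the algebra and the small cases $n\le 3$ ($3u+2u^2$ at $z^4$ on both sides). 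Uniqueness of the constant-term-free solution of $X=z+uz^2/(1-X)^2$ is immediate since the coefficient of $z^n$ depends only on lower-order coefficients, so $z\mathcal{B}=\mathcal{O}$ and the two sets are equinumerous, which is all the statement asserts for finite sets. The one caveat is that the paper's surrounding material (the bijection $\xi$ with the trees of $\NOT$ and the remark that the hook of a vertex can be read off as its children together with its right siblings in the ordered tree) strongly suggests the authors have in mind an explicit bijection --- essentially the rotation correspondence between binary trees on $n$ vertices and plane trees on $n+1$ vertices, under which hooks become the vertex-plus-leftmost-child fibres you allude to in your final ``fan'' remark. Your generating-function route is less explicit but fully rigorous and self-contained; if you want to match the spirit of the paper, the construction sketched in your last sentence is the one to flesh out, and the equality of generating polynomials you have already established is a useful sanity check for it.
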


\subsection{q-analogs of the hook formula}\label{q_analogs} 

As for binary trees, there exists $q$-analogues of the hook formula for NATs 
of a given shape associated to either the number of inversions
or the major index. There are two ingredients: first we need to associate two
permutations to a non-ambiguous tree, and second we need to give a $q$-analogue
of the bilinear map $\Bxy$. It turns out that it is possible to use two
different $q$ namely $q_R$ and $q_L$ for the derivative and integral in $x$
and $y$.

The first step to formulate a $q$-hook formula is to associate to any
non empty non-ambiguous  tree $T$ a pair of permutations $\sigma(T)=(\sigma_L(T),
\sigma_R(T))\in\SG_{\LV(T)} \times \SG_{\RV(T)}$.
\begin{defi}
  Let $T$ be a non-ambiguous tree. Then $\sigma_L(T)$ is obtained by
  performing a left postfix reading of the left labels: precisely we
  recursively read trees $\NodeBT{L}{R}$ by reading the left labels of $L$,
  then the left labels of $R$ and finally the label of the root if it is a left
  child. The permutation $\sigma_R(T)$ is defined similarly reading right
  labels, starting from the right subtree, then the left subtree and finally
  the root.
\end{defi}
If we take back the example of Figure~\ref{fig_exple_nat} we get the two permutations $\sigma_L(T) = (2, 1, 4, 3, 6, 10, 8, 9, 5, 7)$ and $\sigma_R(T) = (1, 2, 3, 4, 5, 7, 11, 9, 6, 8, 10)$.

Recall that the \emph{number of inversions} of a permutation $\sigma\in\SG_n$
is the number of $i<j<=n$ such that $\sigma(i)>\sigma(j)$. A descent of
$\sigma$ is a $i<n$ such that $\sigma(i)>\sigma(i+1)$ and the \emph{inverse
  major index} of $\sigma$ is the sum of the descents of $\sigma^{-1}$. Finally
for a repetition free word $w$ of length $l$ we write $\std(w)$ the
permutations in $\SG_l$ obtained by renumbering $w$ keeping the order of the
letters. For example $\std(36482)=24351$. We define as usual the $q$-integer
$[n]_q := \frac{1-q^n}{1-q}$, and the $q$-factorial $[n]_q! := \prod_{i=1}^{n}
[i]_q$.
\begin{thm}\label{thm-q-hook}
For a non-ambiguous tree $\N$ and a statistic $S\in\{\inv,\imaj\}$, define
\begin{equation}
  w_S(T) := q_L^{S(\sigma_L(T))}q_R^{S(\sigma_R(T))}.
\end{equation}
Then, for any non empty binary tree $B$
\begin{equation}
\sum_{T\in\NAT(\B)} w_{\inv}(T) = 
\sum_{T\in\NAT(\B)} w_{\imaj}(T) =
\frac{ |\LV(\B)|_{q_L}! \cdot |\RV(\B)|_{q_R}!}{
  \displaystyle\prod_{U: \text{left child}}[\EL(U)]_{q_L} \cdot 
  \prod_{U: \text{right child}}[\ER(U)]_{q_R}
}\,.
\end{equation}
\end{thm}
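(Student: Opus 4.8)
The plan is to mimic the proof strategy already used for the non-$q$ hook-length formula (Propositions~\ref{prop:commutation_phi} and \ref{prop_hook_nat}), replacing the pumping function $\Bxy$ by a $q$-deformed bilinear operator and tracking the two statistics through the recursive decomposition $B=\NodeBT{L}{R}$. First I would fix, for a non-ambiguous tree $T$ of shape $\NodeBT{L}{R}$, the relationship between $\sigma_L(T)$ and the pair $(\sigma_L(T_L),\sigma_L(T_R))$ coming from the decomposition of Subsection~\ref{differential}: by definition of the left postfix reading, $\sigma_L(T)$ is the word obtained by concatenating the left labels read in $L$, then those read in $R$, then (possibly) the root, where the labels used in $L$ (including its root if it is a left child) form a chosen subset $A\subseteq\i{1}{|\LV(T)|}$ and those in $R$ form the complement. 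Thus $\sigma_L(T)$ is, up to standardization, a shuffle of a word of pattern $\sigma_L(T_L)$ with a word of pattern $\sigma_L(T_R)$, the shuffle being dictated by the subset $A$; symmetrically for $\sigma_R$ with the roles of $L$ and $R$ exchanged. The key classical input is then the generating-function identity for $\inv$ (respectively $\imaj$) over shuffles: if $u,v$ are words on disjoint alphabets of sizes $a,b$, then $\sum_{w\in u\shuffle v} q^{\inv(\std(w))} = \qbinom{a+b}{a}{q}\, q^{\inv(\std(u))}q^{\inv(\std(v))}$, and likewise for $\imaj$ (this is the standard fact behind the $q$-multinomial, and for $\imaj$ it follows from the theory of $P$-partitions / the shuffle compatibility of $\imaj$). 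This turns the binomial recursion \eqref{Equation:BNat-binom} into its $q$-analogue.

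Next I would set up the $q$-pumping function. Introduce the two-variable Jackson-type operators: $\partial_{q,x}$ acting by $\partial_{q,x} x^n = [n]_{q_L} x^{n-1}$, its inverse integral $\int_{q,x}$ sending $x^{n-1}\mapsto x^n/[n]_{q_L}$, and the analogous $\partial_{q,y},\int_{q,y}$ with parameter $q_R$. Define
\begin{equation}
\Bxy_q(u,v)=\int_{q,x}\int_{q,y}\partial_{q,x}(u)\cdot\partial_{q,y}(v),
\qquad
\Bxy_q(\emptyset)=x+y,
\qquad
\Bxy_q(\NodeBT{L}{R})=\Bxy_q(\Bxy_q(L),\Bxy_q(R)).
\end{equation}
One checks directly that $\Bxy_q$ applied to monomials produces exactly the $q$-binomial coefficients: $\Bxy_q\!\left(\frac{x^{a}y^{b}}{[a]_{q_L}![b]_{q_R}!},\frac{x^{c}y^{d}}{[c]_{q_L}![d]_{q_R}!}\right)$ has a coefficient that is $\qbinom{a+c-1}{a-1}{q_L}\qbinom{b+d-1}{b-1}{q_R}$ times $\frac{x^{a+c}y^{b+d}}{[a+c]_{q_L}![b+d]_{q_R}!}$, matching the shifts $\wi_L=|\LV|+1$. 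Weighting each NAT $T$ of shape $B$ by $\frac{x^{\wi_L(T)}}{[\wi_L(T)]_{q_L}!}\frac{y^{\wi_R(T)}}{[\wi_R(T)]_{q_R}!}w_S(T)$ and summing, the shuffle identity above shows that this weighted sum is intertwined by $\Bxy_q$ exactly as in Proposition~\ref{prop:commutation_phi}; hence $\sum_{T\in\NAT(B)}(\text{weight}) = \Bxy_q(B)$ for every non-empty $B$.

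Finally I would de-recursivise $\Bxy_q(B)$ exactly as is done (tacitly) between Proposition~\ref{prop:commutation_phi} and Proposition~\ref{prop_hook_nat}: expanding the nested operators along the tree, each left vertex $U$ contributes a factor $1/[\EL(U)]_{q_L}$ and each right vertex a factor $1/[\ER(U)]_{q_R}$, while the top-level integrals produce the global $[\,|\LV(B)|\,]_{q_L}!\,[\,|\RV(B)|\,]_{q_R}!$; extracting the coefficient of $\frac{x^{\wi_L(B)}}{[\wi_L(B)]_{q_L}!}\frac{y^{\wi_R(B)}}{[\wi_R(B)]_{q_R}!}$ then gives the claimed $q$-hook formula, and the same computation with $S=\inv$ or $S=\imaj$ gives the same right-hand side, proving the two sums are equal. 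The main obstacle, and the only place where $\inv$ and $\imaj$ must be treated genuinely separately, is establishing the shuffle generating-function identity in the precise form needed here — in particular for $\imaj$ one must verify that the inverse major index is \emph{shuffle-compatible} in the sense of Gessel (so that the $q$-binomial really factors out), and one must make sure the postfix reading order chosen in the definition of $\sigma_L,\sigma_R$ is compatible with the decomposition $T\mapsto(T_L,T_R)$ so that a \emph{shuffle} (and not some more complicated interleaving involving the root placement) is what actually appears. Once that lemma is in place, everything else is a routine transcription of the $q=1$ argument and of the method of~\cite{HivNovThib08}.
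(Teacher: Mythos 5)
Your proposal is correct and follows essentially the same route as the paper: the paper likewise introduces the $q$-deformed pumping function $\Bxy_q$ and packages your shuffle/$q$-binomial identity as the commutation property $\Psi_S(\BSG^2(\sigma,\mu))=\Bxy_q(\Psi_S(\sigma),\Psi_S(\mu))$ for a bilinear pumping function on pairs of permutations, $\BSG(\sigma,\mu)=\sum uv$ over $\std(u)=\int\sigma$, $\std(v)=\mu$ --- exactly your value-distributed concatenation, i.e.\ the inverse of a position shuffle, which is why $\imaj$ (major index of the inverse) rather than the major index itself is the right statistic here. The de-recursivization of $\Bxy_q(B)$ into the $q$-hook product then proceeds just as you describe.
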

Going back to the non-ambiguous tree of Figure~\ref{fig_exple_nat}, the
inversions numbers are $\inv(\sigma_L(T)) = 11$ and, $\inv(\sigma_R(T)) = 7$ so
that $w_{\inv}(T) = q_L^{11}q_R^{7}$. For the inverse major index, we get the permutations $\sigma_L(T)^{-1} = (2, 1, 4, 3, 9, 5, 10, 7, 8, 6)$ and $
\sigma_R(T)^{-1} = (1, 2, 3, 4, 5, 9, 6, 10, 8, 11, 7)$.
Consequently, $\imaj(\sigma_L(T)) = 1 + 3 + 5 + 7 + 9 = 25$
and     $\imaj(\sigma_R(T)) = 6 + 8 + 10 = 24$ so that 
$w_{\imaj}(T) = q_L^{25}q_R^{24}$.

Note that it is possible to read directly $w_S(T)$ on $T$.  We do not give the
precise statement here to keep the presentation short.

The argument of the proof follows the same path as for the hook formula, using
pumping functions: recall that the $q$-derivative and $q$-integral are
defined as $\partial_{x, q} x^n := [n]_qx^{n-1}$ and
$\int_{x, q} x^n := \frac{x^{n+1}}{[n+1]_q}$.
Then the $(q_L, q_R)$-analogue of the pumping function is given by
\begin{equation}
  \Bxy_q(u, v) = \int_{x,q_L}\int_{y,q_R}
  \partial_{x,q_L}(u)\cdot
  \partial_{y,q_R}(v).
\end{equation}
We also define recursively $\Bxy_q(B)$ by $\Bxy_q(\emptyset) := x + y$ and
$\Bxy_{q}\left(\NodeBT{L}{R}\right) = \Bxy_q\left(\Bxy_q(L), \Bxy_q(R)\right)\,$. Then
the main idea is to go through a pumping function on pairs of permutations. We
write $\QQ\SG$ the vector space of formal sums of permutations. For any
permutation $\sigma\in\SG_n$ we write $\int\sigma=\sigma[n+1]$ the
permutation in $\SG_{n+1}$ obtained by adding $n+1$ at the end. Again we
extend $\int$ by linearity.
\begin{defi}
  The \emph{pumping function on permutation} is the bilinear map
  $\BSG:\QQ\SG\times\QQ\SG\mapsto\QQ\SG$ defined
  for $\sigma\in\SG_m$ and $\mu\in\SG_n$ by $
    \BSG(\sigma,\mu) =
    \sum_{\substack{uv\in\SG_{m+n+1} \\ \std(u)=\int\sigma \\ \std(v)=\mu}} uv\,.$
    
  We define also a pumping function on pairs of permutations
  \begin{equation*}
    \BSG^2\left((\sigma_L,\sigma_R),(\mu_L,\mu_R)\right) :=
    (\BSG(\sigma_L,\mu_L),\BSG(\mu_R,\sigma_R))
  \end{equation*}
\end{defi}
For example
$\BSG(21,12)=21345+21435+21534+31425+31524+
41523+32415+32514+42513+43512$.
Note that for two non empty non-ambiguous tree $C,D$.
\begin{equation*}
  \sum_{T\in\BNAT(C, D)} \sigma_L(T) =
  \BSG(\sigma_L(C),\sigma_L(D))
  \qquad\text{and}\qquad
  \sum_{T\in\BNAT(C, D)} \sigma_R(T) =
  \BSG(\sigma_R(D),\sigma_R(C))
\end{equation*}
The central argument is the following commutation property:
\begin{prop}
  For a statistic $S\in\{\inv,\imaj\}$,
  and $(\sigma_L,\sigma_R)\in\SG_m\times\SG_n$, define
  \begin{equation}
  \Psi_S((\sigma_L,\sigma_R)) :=
  q_L^{S(\sigma_L)}\frac{x^{m+1}}{[m+1]_{q_L}!}\,
  q_R^{S(\sigma_R)} \frac{y^{n+1}}{[n+1]_{q_L}!}\,.
\end{equation}
Then for any pairs $\sigma=(\sigma_L,\sigma_R)$ and $\mu=(\mu_L,\mu_R)$, one
has $\Psi_S(\BSG^2(\sigma, \mu)) = \Bxy_q(\Psi_S(\sigma), \Psi_S(\mu))$
\end{prop}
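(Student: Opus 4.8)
The plan is to reduce the identity to a single-variable $q$-shuffle identity for the statistic $S$, and then to prove that identity for $\inv$ and for $\imaj$ separately.

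First I would observe that all three maps respect the splitting into a ``left'' and a ``right'' part. Indeed $\Psi_S((\sigma_L,\sigma_R))$ is a product of a monomial in $x$ (weighted by $q_L$) depending only on $\sigma_L$ and a monomial in $y$ (weighted by $q_R$) depending only on $\sigma_R$; the map $\BSG^2$ acts on the two components separately; and $\Bxy_q(u,v)=\big(\int_{x,q_L}\partial_{x,q_L}u\big)\cdot\big(\int_{y,q_R}\partial_{y,q_R}v\big)$ is a tensor product of an operator in $x$ (using $q_L$) and one in $y$ (using $q_R$). Hence $\Psi_S(\BSG^2(\sigma,\mu))$ and $\Bxy_q(\Psi_S(\sigma),\Psi_S(\mu))$ each factor as a product of an $x$-part and a $y$-part, and it suffices to establish the following one-variable statement and apply it twice, once for the left labels with $q=q_L$ and once for the right labels with $q=q_R$: for $S\in\{\inv,\imaj\}$, $\sigma\in\SG_m$ and $\mu\in\SG_n$,
\[
\Big(\sum_{\tau\in\BSG(\sigma,\mu)}q^{S(\tau)}\Big)\,\frac{x^{m+n+2}}{[m+n+2]_q!}
=\int_{x,q}\Big(\partial_{x,q}\big(q^{S(\sigma)}\tfrac{x^{m+1}}{[m+1]_q!}\big)\cdot q^{S(\mu)}\tfrac{x^{n+1}}{[n+1]_q!}\Big).
\]
Evaluating the right-hand side with $\partial_{x,q}x^k=[k]_qx^{k-1}$ and $\int_{x,q}x^k=x^{k+1}/[k+1]_q$ and cancelling the common power of $x$, this becomes the purely combinatorial identity
\[
\sum_{\tau\in\BSG(\sigma,\mu)}q^{S(\tau)}=q^{S(\sigma)+S(\mu)}\binom{m+n+1}{m}_q,
\]
which is the real content of the proposition. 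I would also record the trivial but essential fact that prolongation by the maximum is invisible to the statistic: $\inv(\int\sigma)=\inv(\sigma)$, and $\imaj(\int\sigma)=\imaj(\sigma)$ because $(\int\sigma)^{-1}=\int(\sigma^{-1})$ and appending the largest letter creates no new descent.

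For $S=\inv$ this is classical: writing $\tau=uv$ as in the definition of $\BSG$ and letting $A\subseteq\{1,\dots,m+n+1\}$ be the set of values carried by one of the two blocks of $\tau$, one has $\inv(\tau)=\inv(u)+\inv(v)+\#\{(a,b):a\in A,\,b\notin A,\,a>b\}$, hence $\inv(\tau)=\inv(\sigma)+\inv(\mu)+\#\{(a,b):a\in A,\,b\notin A,\,a>b\}$ by the prolongation remark, and $\sum_{A}q^{\#\{(a,b):a\in A,\,b\notin A,\,a>b\}}=\binom{m+n+1}{m}_q$ is the Gaussian-binomial (``content of a subset'') identity. For $S=\imaj$ I would pass to inverses: a direct check shows that $\tau\mapsto\tau^{-1}$ carries $\BSG(\sigma,\mu)$ bijectively onto the set of all interleavings of two fixed words which are $\std$-shifted copies of (a prolongation of) $\sigma^{-1}$ and of $\mu^{-1}$, placed on complementary intervals of values --- the cut between the two blocks of $\tau$ in positions becoming a cut in values after inversion. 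Consequently $\sum_{\tau}q^{\imaj(\tau)}=\sum_{w}q^{\operatorname{maj}(w)}$ over all interleavings $w$ of two words on disjoint value intervals, and the classical shuffle formula for the major index (in the spirit of the $q$-hook formulas of \cite{BjornerWachs,HivNovThib08}) evaluates this to $q^{\imaj(\sigma)+\imaj(\mu)}\binom{m+n+1}{m}_q$, again using the prolongation remark.

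I expect the $\imaj$ case to be the only genuine difficulty. The delicate points are, first, verifying carefully that inversion turns the concatenation $\tau=uv$ into an interleaving of two words on disjoint value intervals (so that the positional cut becomes a value cut, and the prolongation by a maximal letter is exactly absorbed); and second, pinning down the precise $\operatorname{maj}$-shuffle formula, since in such an interleaving every ``large letter immediately followed by a small letter'' is a forced descent, and these forced descents must be accounted against the powers of $q$ appearing in $\binom{m+n+1}{m}_q$. An essentially equivalent route is to use a Foata/Rawlings-type bijection carrying $\operatorname{maj}$ to $\inv$ and to check it is compatible with the block structure of $\BSG$; this trades $\operatorname{maj}$-bookkeeping for bijection-bookkeeping but does not remove it. Everything else --- the factorisation into $x$- and $y$-parts and the elementary $q$-calculus --- is routine.
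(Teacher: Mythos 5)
Your overall strategy --- factoring into an $x$/$q_L$ part and a $y$/$q_R$ part and reducing to a one-variable $q$-shuffle identity --- is the natural one (the paper, being an extended abstract, gives no proof of this proposition at all), but the two halves of your argument do not match, and the mismatch hides a real problem. In $\BSG(\sigma,\mu)$ the \emph{first} factor is prolonged: the prefix $u$ has length $m+1$, so the value set $A$ you sum over has cardinality $m+1$, and the cross-inversion sum is $\sum_A q^{\#\{(a,b):a\in A,\,b\notin A,\,a>b\}}=\binom{m+n+1}{m+1}_q=\frac{[m+n+1]_q!}{[m+1]_q!\,[n]_q!}$, not $\binom{m+n+1}{m}_q$ as you assert. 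Your $q$-calculus reduction of the displayed $\Psi_S$ and $\Bxy_q$ is, on the other hand, carried out correctly and demands exactly $\frac{[m+n+1]_q!}{[m]_q!\,[n+1]_q!}=\binom{m+n+1}{m}_q$. For $m\neq n$ these disagree, so your treatment of the $\inv$ case is wrong as written; worse, the proposition taken completely at face value is false. For instance, with $\sigma=(\varepsilon,\varepsilon)$ and $\mu=(1,\varepsilon)$ one computes $\Psi_{\inv}(\BSG^2(\sigma,\mu))=(1+q_L)\frac{x^3y^2}{[3]_{q_L}!\,[2]_{q_R}}=\frac{x^3y^2}{[3]_{q_L}[2]_{q_R}}$, whereas $\Bxy_q(\Psi_{\inv}(\sigma),\Psi_{\inv}(\mu))=\frac{x^3y^2}{[3]_{q_L}[2]_{q_L}[2]_{q_R}}$; they differ by a factor $[2]_{q_L}$.

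What you needed to notice is that the displayed formulas carry the same first/second-argument (equivalently left/right) mix-up that already occurs in the $q=1$ part of the paper: comparing with~\eqref{Equation:BNat-binom}, where the left labels contribute $\binom{|\LV(T)|}{|\LV(R)|}=\frac{[a+b+1]!}{[a+1]!\,[b]!}$ with $a=|\LV(L)|$, $b=|\LV(R)|$, the operator compatible with $\BSG(\sigma_L,\mu_L)$ and $\BSG(\mu_R,\sigma_R)$ must apply $\partial_{x,q_L}$ to the factor coming from the \emph{second} argument and $\partial_{y,q_R}$ to the \emph{first} (or, equivalently, the two components of $\BSG^2$ must be swapped); the stray $[n+1]_{q_L}$ in the definition of $\Psi_S$ is a symptom of the same garbling. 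Once the convention is repaired, your reduction yields precisely $\sum_{\tau\in\BSG(\sigma,\mu)}q^{S(\tau)}=q^{S(\sigma)+S(\mu)}\binom{m+n+1}{m+1}_q$, your subset argument proves it for $\inv$, and the $\imaj$ case (which you only sketch, and which is indeed the delicate half) follows from the classical equidistribution of $\inv$ and $\imaj$ on such shifted-shuffle classes as in \cite{BjornerWachs,HivNovThib08}. As it stands, however, your proof both mis-evaluates the Gaussian binomial and thereby appears to ``verify'' a one-variable identity that is false, so it cannot be accepted without fixing the convention and redoing the $\inv$ evaluation, and without completing the $\imaj$ argument.
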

As a consequence, noting that $w_S(T) = \Phi_S(\sigma(T))$, one finds that for
any non empty non-ambiguous trees $C$ and $D$, 
\begin{equation*}
  \sum_{T\in\BNAT(C, D)} w_S(T) = 
  \Phi_S\left(\BSG^2(\sigma(C),\sigma(D)\right)
  = \Bxy_q(w_S(C), w_S(D))\,.
\end{equation*}
Applying this recursively on the structure of a binary tree $B$, we have that
$ \sum_{T\in\NATB} w_S(T) = \Bxy_q(B)\,.$ Unfolding the recursion for
$\Bxy_q(B)$, gives finally Theorem~\ref{thm-q-hook}.

We conclude this section by an example. Let $B = \scalebox{0.5}
{ \newcommand{\nodea}{\node[draw,fill,circle] (a) {$$};}
  \newcommand{\nodeb}{\node[draw,fill,circle] (b) {$$};}
  \newcommand{\nodec}{\node[draw,fill,circle] (c) {$$};}
  \newcommand{\noded}{\node[draw,fill,circle] (d) {$$};}
  \newcommand{\nodee}{\node[draw,fill,circle] (e) {$$};}
  \newcommand{\nodef}{\node[draw,fill,circle] (f) {$$};}
  \newcommand{\nodeg}{\node[draw,fill,circle] (g) {$$};}
  \newcommand{\nodeh}{\node[draw,fill,circle] (h) {$$};}
\begin{tikzpicture}[scale=0.5,baseline=(current bounding box.center),
                      every node/.style={inner sep=2pt}]
\matrix[column sep=.15cm, row sep=.15cm,ampersand replacement=\&]{
         \&         \&         \& \nodea  \&         \&         \&         \&         \&         \\
         \& \nodeb  \&         \&         \&         \&         \&         \& \nodee  \&         \\
 \nodec  \&         \& \noded  \&         \&         \& \nodef  \&         \&         \& \nodeh  \\
         \&         \&         \&         \&         \&         \& \nodeg  \&         \&         \\
};
\path[thick] (b) edge (c) edge (d)
	(f) edge (g)
	(e) edge (f)
	(a) edge (b) edge (e)
        (e) edge (h);
\end{tikzpicture}}$.
Then one finds that the $q$- hook formula gives $(qx^3 + qx^2 + qx + 1)(qy^2 +
qy + 1)(qx + 1)$. Expanding this expression, one finds that the coefficient of $qx^2qy$ is $2$.
For the $\imaj$ statistic it corresponds to the two following non-ambiguous
trees which are shown with their associated left and right permutations:
\[
{\newcommand{\nodea}{\node (a) {(\color{red}$4$,\color{blue}$5$)}
;}\newcommand{\nodeb}{\node (b) {\color{red}$3$}
;}\newcommand{\nodec}{\node (c) {\color{red}$2$}
;}\newcommand{\noded}{\node (d) {\color{blue}$2$}
;}\newcommand{\nodee}{\node (e) {\color{blue}$4$}
;}\newcommand{\nodef}{\node (f) {\color{red}$1$}
;}\newcommand{\nodeg}{\node (g) {\color{blue}$3$}
;}\newcommand{\nodeh}{\node (h) {\color{blue}$1$}
;}\begin{tikzpicture}[baseline=(current bounding box.center)]
\matrix[column sep=1.5mm, row sep=1.5mm,ampersand replacement=\&]{
         \&         \&         \& \nodea  \&         \&         \&         \&         \&         \\ 
         \& \nodeb  \&         \&         \&         \&         \&         \& \nodee  \&         \\ 
 \nodec  \&         \& \noded  \&         \&         \& \nodef  \&         \&         \& \nodeh  \\ 
         \&         \&         \&         \&         \&         \& \nodeg  \&         \&         \\
};

\path (b) edge (c) edge (d)
	(f) edge (g)
	(e) edge (f) edge (h)
	(a) edge (b) edge (e);
\end{tikzpicture}}
\left((2, 3, 1), (1, 3, 4, 2)\right),\quad
{ \newcommand{\nodea}{\node (a) {(\color{red}$4$,\color{blue}$5$)}
;}\newcommand{\nodeb}{\node (b) {\color{red}$3$}
;}\newcommand{\nodec}{\node (c) {\color{red}$2$}
;}\newcommand{\noded}{\node (d) {\color{blue}$2$}
;}\newcommand{\nodee}{\node (e) {\color{blue}$4$}
;}\newcommand{\nodef}{\node (f) {\color{red}$1$}
;}\newcommand{\nodeg}{\node (g) {\color{blue}$1$}
;}\newcommand{\nodeh}{\node (h) {\color{blue}$3$}
;}\begin{tikzpicture}[baseline=(current bounding box.center)]
\matrix[column sep=1.5mm, row sep=1.5mm,ampersand replacement=\&]{
         \&         \&         \& \nodea  \&         \&         \&         \&         \&         \\ 
         \& \nodeb  \&         \&         \&         \&         \&         \& \nodee  \&         \\ 
 \nodec  \&         \& \noded  \&         \&         \& \nodef  \&         \&         \& \nodeh  \\ 
         \&         \&         \&         \&         \&         \& \nodeg  \&         \&         \\
};

\path (b) edge (c) edge (d)
	(f) edge (g)
	(e) edge (f) edge (h)
	(a) edge (b) edge (e);
\end{tikzpicture}}
\left((2, 3, 1), (3, 1, 4, 2)\right)\]

\section{Non-ambiguous trees in higher dimension}
\label{gnat}

In this section we give a generalisation of NATs to higher dimensions.
NATs are defined as binary trees whose vertices are 
embedded in $\mathbb{Z}^2$, and edges are objects of dimension 1 (segments).
Let $d \ge k \ge 1$ be two integers.
In higher dimension, binary trees are replaced by $\binom{d}{k}$-ary trees 
embedded in $\mathbb{Z}^d$ 
and edges are objects of dimension $k$.
As in Section~\ref{differential} we obtain differential equations for these objects.

\subsection{Definitions}
\label{def_natdk} 

We call \emph{$(d,k)$-direction} a subset of cardinality $k$ of $\{1, \ldots, d\}$.
The set of $(d,k)$-directions is denoted by $\edir$. 
A $(d,k)$-tuple is a $d$-tuple of $(\mathbb{N}\cup\{\bullet\})^d$, 
in which $k$ entries are integers and $d-k$  are $\bullet$.
For instance,
$
(\bullet, \ 1, \  \bullet, \  5,\  2, \ \bullet, \ \bullet, \ 3, \ \bullet)
$
is a $(9,4)$-tuple.
The direction of a $(d,k)$-tuple $U$ is the set indices of $U$
corresponding to entries different from $\bullet$.
For instance, the direction of our preceding example is $\{2,4,5,8\}$.

\begin{defi} 
A \emph{$\binom{\d}{\k}$-ary} tree $\M$ is a tree whose children of given vertex
are indexed by a $(\d,\k)$-direction.
\end{defi}

A $(\d,\k)$-ary tree has at most $\binom{\d}{\k}$ children.
A $\binom{\d}{\k}$-ary tree will be represented as an ordered tree where the 
children of a vertex $S$ are drawn from left to right with respect to the 
lexicographic order of their indices.
If a vertex $S$ has no child associated to an index $\dir$, we draw an half 
edge in this direction.
An example is drawn on Figure~\ref{fig_gnat}.


\begin{defi}\label{def_gnat}
A \emph{non-ambiguous tree of dimension $(d,k)$} is a labelled 
\emph{$\binom{d}{k}$-ary} tree such that:
\begin{enumerate}
\item\label{gnat_dk_tuple} a child of index $\dir$ is labelled with a $(d,k)$-tuple of direction 
$\dir$ and \label{gnat_root} the root is labelled with a $(d,d)$-tuple;
\item\label{gnat_growth} for any descendant $U$  of $V$, if the $i$-th component of $U$ 
and $V$ are different from $\bullet$, then the $i$-th component of $V$ is
strictly greater than the $i$-th component of $U$;
\item\label{gnat_distinct} for each $i \in \i{1}{\d}$, all the $i$th components,
different from $\bullet$, are pairwise distinct and \label{gnat_interval} the set of $i$th components,
different from $\bullet$, of every vertices in the tree, is an interval,
whose minimum is $1$.
\end{enumerate}

The set of non-ambiguous trees of dimensions $(d,k)$ is denoted by $\NATdk$.
\end{defi}

We write \nat for a non-ambiguous tree (of dimensions $(d,k)$).
Figure \ref{fig_gnat} gives an example of a \nat[3][1] and a \nat[3][2].

\begin{figure}[h]
    \begin{center}
        \scalebox{0.4}{
            \begin{tikzpicture}[
                level 1/.style={sibling distance=7.5cm},
                level 2/.style={sibling distance=2.5cm},
                level 3/.style={sibling distance=2.5cm},
                half/.style={
                    edge from parent/.style=edge
                },
                edge/.code={
                    \draw (\tikzparentnode) -- ($(\tikzparentnode)!6mm!(\tikzchildnode)$);
                }
            ]
                \node{\Huge(5,7,6)}
                    child{ node{\Huge(4,$\bullet$,$\bullet$)}
                        child{ node{\Huge(1,$\bullet$,$\bullet$)} }
                        child[half]
                        child{ node{\Huge($\bullet$,$\bullet$,5)}
                            child[half]
                            child{ node{\Huge($\bullet$,5,$\bullet$)}
                                child[half]
                                child{ node{\Huge($\bullet$,3,$\bullet$)} }
                                child{ node{\Huge($\bullet$,$\bullet$,4)} }
                            }
                            child{ node{\Huge($\bullet$,$\bullet$,2)} }
                        }
                    }
                    child{ node{\Huge($\bullet$,4,$\bullet$)}
                        child[half]
                        child[half]
                        child{ node{\Huge($\bullet$,$\bullet$,1)} }
                    }
                    child{ node{\Huge($\bullet$,$\bullet$,4)}
                        child{ node{\Huge(2,$\bullet$,$\bullet$)} }
                        child{ node{\Huge($\bullet$,6,$\bullet$)} 
                            child{ node{\Huge(3,$\bullet$,$\bullet$)}
                                child[half]
                                child{ node{\Huge($\bullet$,2,$\bullet$)} }
                                child[half]
                            }
                            child{ node{\Huge($\bullet$,1,$\bullet$)} }
                            child[half]
                        }
                        child[half]
                    }
                    ;

            \end{tikzpicture}
        }
        \hspace{1cm}
        \scalebox{0.6}{
            \begin{tikzpicture}[
                level 1/.style={sibling distance=2cm},
                level 2/.style={sibling distance=1.5cm},
                half/.style={
                    edge from parent/.style=edge
                },
                edge/.code={
                    \draw (\tikzparentnode) -- ($(\tikzparentnode)!6mm!(\tikzchildnode)$);
                }
            ]
                \node{\LARGE(6,5,4)}
                    child{ node{\LARGE(5,3,$\bullet$)}
                        child{ node{\LARGE(3,1,$\bullet$)} }
                        child{ node{\LARGE(2,$\bullet$,2)} }
                        child[half]
                    }
                    child{ node{\LARGE(1,$\bullet$,1)} }
                    child{ node{\LARGE($\bullet$,4,3)}
                        child{ node{\LARGE(4,2,$\bullet$)} }
                        child[half]
                        child[half]
                    }
                ;
            \end{tikzpicture}
        }
    \end{center}
    \caption{A NAT of dimension $(3,1)$ and a NAT of dimension $(3,2)$.}
    \label{fig_gnat}\label{ex_kdtree}
\end{figure}
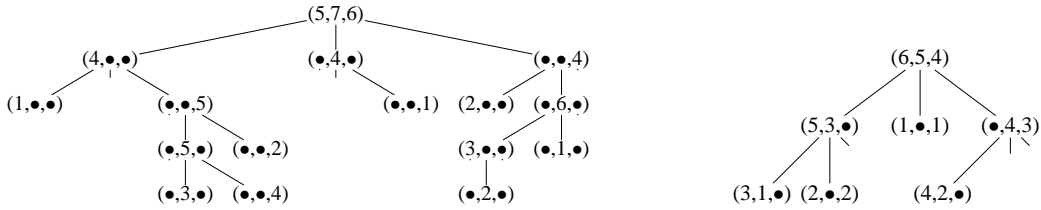

\begin{defi}\label{gnat_gs}

The \emph{geometric size} of a \nat is the $d$-tuple of integers 
$(w_1, \ldots, w_d)$ which labels the root of the \nat, it is denoted by $\gs$.
The \emph{$\dir$-size} of a \nat   is the number of vertices in the tree of
direction $\dir$, the set of such vertices is denoted by $\DV$. 
\end{defi}

Proposition~\ref{prop_relation_sizes} gives the relation between the 
geometric size and the $\dir$-size of a non-ambiguous trees.

\begin{prop}
\label{prop_relation_sizes}
Let $\M$ be a $\binom{d}{k}$-ary tree, the root label is constant on 
$\NATdk$s of shape $M$ ($\NATdk(\M)$):
$$
\wi_i(M)
:=
\wi_i
=
\sum_{\dir \in \edir \ \mid\ i \in \dir} |\DV(\M)| +1
.
$$
\end{prop}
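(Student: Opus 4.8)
The plan is to fix a shape $\M$ and an arbitrary $T\in\NATdk(\M)$, and to read off each root coordinate $\wi_i$ directly from the three axioms of Definition~\ref{def_gnat}. For $i\in\i{1}{\d}$ let $C_i$ be the set of vertices of $T$ whose $i$-th component is different from $\bullet$. The first step is to observe that $|C_i|$ depends only on $\M$: by item~\ref{gnat_dk_tuple} a non-root vertex lies in $C_i$ exactly when its direction $\dir$ contains $i$, while the root always lies in $C_i$ since by item~\ref{gnat_root} it carries a $(\d,\d)$-tuple. Hence $|C_i| = 1 + \sum_{\dir\in\edir,\ i\in\dir} |\DV(\M)|$, which is manifestly a function of the shape alone.

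The second step uses items~\ref{gnat_distinct} and~\ref{gnat_interval} to identify the set of $i$-th coordinates occurring among the vertices of $C_i$: these coordinates are pairwise distinct integers and they form an interval whose minimum is $1$, so the set of them is exactly $\i{1}{|C_i|}$. The third step invokes item~\ref{gnat_growth} with $V$ taken to be the root, which is an ancestor of every vertex and whose $i$-th component is an integer: this forces the root's $i$-th component to be strictly larger than the $i$-th component of every other vertex of $C_i$, so it must be the maximum of the interval $\i{1}{|C_i|}$, namely $|C_i|$. Combining the two computations yields $\wi_i(\M) = |C_i| = 1 + \sum_{\dir\in\edir,\ i\in\dir}|\DV(\M)|$, which is independent of $T$; this proves simultaneously that the root label is constant on $\NATdk(\M)$ and that it has the asserted value.

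There is no real obstacle here, since the statement is essentially a repackaging of the axioms. The only points requiring a little care are to check that the root contributes exactly $1$ to $|C_i|$ and is not among the vertices counted by the various $|\DV(\M)|$ — which holds because its direction $\i{1}{\d}$ is a $(\d,\d)$-direction, not a $(\d,\k)$-direction when $\k<\d$ — and to note that items~\ref{gnat_distinct} and~\ref{gnat_interval} are genuinely used together: distinctness alone would not pin down the top value, and the interval condition alone would not guarantee the $|C_i|$ coordinates are distinct. Once both are in place, the identification of the root coordinate with the maximum, equal to $|C_i|$, is forced.
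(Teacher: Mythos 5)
Your proof is correct and complete; the paper, being an extended abstract, omits the proof of this proposition entirely, and your argument --- counting the vertices with non-$\bullet$ $i$-th coordinate, identifying their coordinates with the interval $\{1,\dots,|C_i|\}$ via the distinctness-plus-interval axiom, and placing the root at the maximum via the growth axiom --- is the natural (and essentially the only) one. Your care about the root contributing exactly $1$ and not being double-counted inside any of the terms $|\V_\dir(\M)|$ is precisely the right point to check.
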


\subsection{Associated differential equations} 

In this section, we denote by $x_{\{i_1, \ldots, i_k\}}$ the product 
$x_{i_1} \times \ldots \times x_{i_k}$,
by $\partial_{\{i_1, \ldots, i_k\}}$ the operator
$\partial_{x_{i_1}} \partial_{x_{i_2}} \ldots \partial_{x_{i_k}}$
and by $\int_{\{i_1, \ldots, i_k\}}$ the operator
$\int_{x_{i_1}} \int_{x_{i_2}} \ldots \int_{x_{i_k}}$.
As for non-ambiguous trees (Proposition \ref{prop_hook_nat}),
there is a hook formula  for the number of non-ambiguous trees
with fixed underlying tree.
Let $\M$ be a $\binom{d}{k}$-ary tree, for each vertex $U$
we denote by $\E_i(U)$ the number of vertices, of the subtree whose root is $U$
(itself included in the count), whose direction contains $i$.


\begin{equation}
|\NATdk(M)|
=
\prod_{i=1}^d 
\left( \wi_i( \M ) - 1 \right) !
\left(\displaystyle
		\prod_{
        \substack{
            U: \text{ child of direction containing } i
        }
    }{
        \E_i(U)
		}
	\right)^{-1}\,
.
\end{equation}

There is a $(d,k)$-dimensional analogue of the fixed point differential
Equation~\ref{equ_gfn}:
\begin{prop}
\label{prop_equ_diff_natdk}
The exponential generating function $\GFNdk$ of generalized non-ambiguous trees
satisfies the following differential equation
\begin{equation}
\label{equ_gfndk}
\GFNdk
:= 
\sum_{\N \in \NATdk^*}
	\prod_{i=1}^d
	\frac{
		x_i^{\wi_i(\N)-1}
	}{
		( \wi_i(\N) -1) !
	}
=
\prod_{\dir \in \edir} \left( 1 + \int_{\dir} \GFNdk  \right)
\end{equation}
\end{prop}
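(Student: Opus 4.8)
The plan is to imitate the binary case of Proposition~\ref{prop_equ_diff_nat}: decompose a non-ambiguous tree at its root, so that the binomial splitting of~\eqref{Equation:BNat-binom} becomes a multinomial one ranging over the $\binom{d}{k}$ directions, and then transcribe the resulting bijection as an identity of exponential generating functions. Throughout, for a labelled $\binom{d}{k}$-ary tree $\N'$ I write $\operatorname{dir}(v)$ for the direction of a vertex $v$ and $e_i(\N'):=\bigl|\{v:\ i\in\operatorname{dir}(v)\}\bigr|$ for each coordinate $i$; by axiom~\ref{gnat_distinct} of Definition~\ref{def_gnat}, $e_i(\N')$ is then the largest $i$-th component occurring in $\N'$.

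The first step is an auxiliary bijection. Erasing from the root's label the $d-k$ entries lying in positions outside a fixed direction $\dir$ maps $\NATdk^*$ bijectively onto a set of labelled trees I call \emph{$\dir$-rooted NATs}; the inverse map \emph{completes the root} by reinserting, in each position $i\notin\dir$, the value $e_i+1$, which is legitimate because that value exceeds every other value in coordinate $i$ and keeps the set of $i$-th components an interval based at~$1$. When $\N''\in\NATdk^*$ completes the $\dir$-rooted NAT $\N'$ one has $e_i(\N')=\wi_i(\N'')$ for $i\in\dir$ and $e_i(\N')=\wi_i(\N'')-1$ for $i\notin\dir$, whence
\[
\sum_{\N'\ \dir\text{-rooted}}\ \prod_{i=1}^{d}\frac{x_i^{\,e_i(\N')}}{e_i(\N')!}\ =\ \int_{\dir}\GFNdk\,;
\]
adjoining the empty choice, namely the half-edge in direction $\dir$ (for which every $e_i=0$ and the weight is~$1$), makes the left-hand side equal to $1+\int_{\dir}\GFNdk$.

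The second step is the root decomposition, the analogue of~\eqref{Equation:BNat-binom}. Let $\N\in\NATdk^*$ have root label $(\wi_1,\dots,\wi_d)$. For each $\dir\in\edir$, let $\N'_\dir$ be the subtree hanging below the root's child of index $\dir$ — with its coordinates renumbered so that each coordinate's values form the initial segment $\{1,\dots,e_i\}$ — when that child exists, and the half-edge otherwise; this produces a tuple $(\N'_\dir)_{\dir\in\edir}$ of half-edges and $\dir$-rooted NATs. Every non-root vertex of $\N$ lies in exactly one of these subtrees and keeps there the set of coordinates occupied by its label, so $\wi_i-1=\sum_{\dir\in\edir}e_i(\N'_\dir)$ for every $i$; and recovering $\N$ from the tuple amounts to choosing, independently for each $i$, an ordered partition of $\{1,\dots,\wi_i-1\}$ into blocks of sizes $\bigl(e_i(\N'_\dir)\bigr)_{\dir}$ — order-preserving renumbering, disjointness of the blocks, and the root entries remaining maximal make the reconstruction an element of $\NATdk^*$. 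Hence exactly $\prod_{i=1}^{d}\tfrac{(\wi_i-1)!}{\prod_{\dir}e_i(\N'_\dir)!}$ trees in $\NATdk^*$ yield a prescribed tuple.

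Combining the two steps, I would group the sum defining $\GFNdk$ by the tuple $(\N'_\dir)_{\dir\in\edir}$ and substitute $\wi_i-1=\sum_\dir e_i(\N'_\dir)$:
\begin{align*}
\GFNdk
&=\sum_{(\N'_\dir)_\dir}\Biggl(\prod_{i=1}^{d}\frac{(\wi_i-1)!}{\prod_{\dir}e_i(\N'_\dir)!}\Biggr)\prod_{i=1}^{d}\frac{x_i^{\,\wi_i-1}}{(\wi_i-1)!}\\
&=\sum_{(\N'_\dir)_\dir}\prod_{i=1}^{d}\frac{x_i^{\,\sum_\dir e_i(\N'_\dir)}}{\prod_{\dir}e_i(\N'_\dir)!}
=\sum_{(\N'_\dir)_\dir}\prod_{\dir\in\edir}\prod_{i=1}^{d}\frac{x_i^{\,e_i(\N'_\dir)}}{e_i(\N'_\dir)!},
\end{align*}
the factorials cancelling — which is precisely the purpose of the exponential weight — and the last step being a reordering of a finite product. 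Since the choices in different directions are independent, the multisum factors as $\prod_{\dir\in\edir}\bigl(1+\int_{\dir}\GFNdk\bigr)$ by the first step, which is~\eqref{equ_gfndk}. The verifications that renumbering and recombination respect the three axioms of Definition~\ref{def_gnat} are routine; the delicate point — the one I would check with the most care — is the bookkeeping of the decomposition step, i.e.\ the identity $\wi_i-1=\sum_\dir e_i(\N'_\dir)$ together with the claim that the per-coordinate multinomial splittings are exactly the extra data needed to recover $\N$ and are mutually independent, so that the correspondence between NATs and tuples is an exact weighted bijection. One could instead follow Section~\ref{differential} verbatim — a $\binom{d}{k}$-linear pumping function, a companion series $\LGFNdk$, and differentiation of its fixed-point equation by $\prod_i\partial_{x_i}^{\binom{d-1}{k-1}}$ — but the direct route seems the shortest.
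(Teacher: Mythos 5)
Your argument is correct and rests on the same idea the paper invokes: the decomposition of a NAT at its root into subtrees indexed by the $\binom{d}{k}$ directions, with the multinomial splitting of the label sets in each coordinate playing the role of \eqref{Equation:BNat-binom}, which is exactly what the paper's $\binom{d}{k}$-linear pumping function encodes. You simply carry out the bookkeeping directly on $\GFNdk$ instead of packaging it in the operator/$\LGFNdk$ formalism of Section~\ref{differential}, and the delicate points you flag (the forced completion of the root label, the identity $\wi_i-1=\sum_\dir e_i(\N'_\dir)$, and the coordinatewise independence of the ordered set partitions) are handled correctly.
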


\begin{proof}
The method is analogue to the method of Section \ref{differential}, and goes through
the use of a $\binom{d}{k}$-linear map and a pumping function for $\binom{d}{k}$-ary trees.
\end{proof}

%

The family of differential equations defined by Equation~\ref{equ_gfndk} can be 
rewritten using differential operators instead of primitives.
We need to introduce the function
$
\LGFNdk
=
\int_{\{1, \dots, d\}}
	\GFNdk
+
\sum_{\dir \in \edir[d][d-k]}
	x_{\dir}
$.
Then, we show that $\LGFNdk$ satisfies the following differential equations:
\begin{prop}
The differential equation satisfied by $\LGFNdk$ is 
$
\partial_1 \ldots \partial_d \LGFNdk
=
\prod_{\dir \in \edir[d][d-k]}
	\partial_{\dir} \LGFNdk
.
$
\end{prop}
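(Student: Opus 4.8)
The plan is to reduce the stated identity to the fixed‑point equation of Proposition~\ref{prop_equ_diff_natdk} by means of the complementation bijection $\dir\mapsto\dir^c:=\{1,\dots,d\}\setminus\dir$ between $(d,d-k)$‑directions and $(d,k)$‑directions. First I would evaluate the left‑hand side. By definition
$\LGFNdk=\int_{\{1,\dots,d\}}\GFNdk+\sum_{\dir\in\edir[d][d-k]}x_{\dir}$, and since $k\ge1$ each index set $\dir\in\edir[d][d-k]$ has cardinality $d-k<d$, so the monomial $x_{\dir}$ misses at least one variable and is annihilated by $\partial_1\cdots\partial_d$. The first summand gives $\partial_1\cdots\partial_d\int_{\{1,\dots,d\}}\GFNdk=\GFNdk$, because $\partial_{x_i}$ undoes $\int_{x_i}$ and all these operators commute. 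Hence the left‑hand side of the proposition equals $\GFNdk$.

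Next I would compute each factor $\partial_{\dir}\LGFNdk$ occurring on the right, for a fixed $\dir\in\edir[d][d-k]$. Applying $\partial_{\dir}=\prod_{i\in\dir}\partial_{x_i}$ to $\int_{\{1,\dots,d\}}\GFNdk$ cancels the iterated integral over the indices of $\dir$ and leaves $\int_{\dir^c}\GFNdk$, an integral over the $(d,k)$‑direction $\dir^c$. Applying $\partial_{\dir}$ to the correction sum $\sum_{\dir'\in\edir[d][d-k]}x_{\dir'}$: whenever $\dir'\ne\dir$, the equal‑cardinality sets satisfy $\dir\setminus\dir'\ne\emptyset$, so $x_{\dir'}$ misses some index of $\dir$ and $\partial_{\dir}x_{\dir'}=0$; and $\partial_{\dir}x_{\dir}=1$. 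Therefore $\partial_{\dir}\LGFNdk=1+\int_{\dir^c}\GFNdk$.

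Finally I would assemble the product, using that $\dir\mapsto\dir^c$ is a bijection $\edir[d][d-k]\to\edir$:
\[
\prod_{\dir\in\edir[d][d-k]}\partial_{\dir}\LGFNdk
=\prod_{\dir\in\edir[d][d-k]}\Big(1+\int_{\dir^c}\GFNdk\Big)
=\prod_{\dir'\in\edir}\Big(1+\int_{\dir'}\GFNdk\Big)
=\GFNdk,
\]
the last equality being exactly Equation~\eqref{equ_gfndk}. This matches the value of the left‑hand side obtained above, which proves the proposition.

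\textbf{Main obstacle.} The only delicate point is the bookkeeping for the correction terms $\sum_{\dir}x_{\dir}$: one must verify both that $\partial_1\cdots\partial_d$ annihilates all of them and that $\partial_{\dir}$ selects precisely the single term $x_{\dir}$, converting it into the constant $1$; it is exactly these constants that reconstitute the factors $1+\int_{\dir^c}\GFNdk$ of the original differential equation. Everything else is the formal calculus of iterated derivatives and primitives together with the complementation bijection on directions.
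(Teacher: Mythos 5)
Your proof is correct and follows exactly the computation the paper intends (the paper omits this proof in the extended abstract): you reduce both sides to the fixed-point equation of Proposition~\ref{prop_equ_diff_natdk} via $\partial_{\dir}\LGFNdk = 1+\int_{\dir^c}\GFNdk$ and the complementation bijection, precisely as in the $(2,1)$ case worked out in the proof of Proposition~\ref{prop_equ_diff_nat}. The bookkeeping of the correction monomials $x_{\dir}$ is handled correctly.
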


In the generic case, we are not able to solve those differential equations.
We know that setting a variable $x_d$ to $0$ gives the generating function 
of NATs of lower dimension.
\begin{prop}
Let $d > k \ge 1$, then
$
\left.
	\GFN_{d,k}
\right|_{x_d=0}
=
\GFN_{d-1,k}
.
$
\end{prop}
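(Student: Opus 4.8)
The plan is to derive the identity from the fixed-point differential equation of Proposition~\ref{prop_equ_diff_natdk}, together with the fact that this equation has a unique power series solution; at the end I also indicate the bijection that explains it.

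First I would record how the substitution $x_d=0$ interacts with the operators occurring in Equation~\eqref{equ_gfndk}. For any power series $g$ and any $(d,k)$-direction $\dir$ containing $d$, every term of $\int_{\dir} g$ is divisible by $x_d$, so $\bigl(\int_{\dir} g\bigr)\big|_{x_d=0}=0$, and hence each factor $1+\int_{\dir}\GFNdk$ indexed by a direction containing $d$ specializes to $1$. If instead $d\notin\dir$, then $\int_{\dir}$ involves only the variables $x_i$ with $i\in\dir\subseteq\i{1}{d-1}$, so it commutes with the substitution $x_d=0$; moreover the $(d,k)$-directions not containing $d$ are exactly the elements of $\edir[d-1][k]$. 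Writing $f_0:=\left.\GFNdk\right|_{x_d=0}$ and specializing Equation~\eqref{equ_gfndk}, the factors indexed by directions containing $d$ collapse to $1$ and we are left with
\[
f_0 = \prod_{\dir\in\edir[d-1][k]}\left(1+\int_{\dir} f_0\right),
\]
which is precisely the equation satisfied by $\GFN_{d-1,k}$ according to Proposition~\ref{prop_equ_diff_natdk} applied in dimension $(d-1,k)$ (legitimate since $d-1\ge k\ge 1$).

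The second step is to check that this fixed-point equation determines its solution uniquely in $\QQ[[x_1,\dots,x_{d-1}]]$, so that $f_0=\GFN_{d-1,k}$. Evaluating the right-hand side at the origin gives $\prod_{\dir\in\edir[d-1][k]}1=1$, so any solution has constant term $1$; and since each operator $\int_{\dir}$ raises the total degree by $k\ge 1$, the coefficient of a monomial of total degree $N+1$ on the right-hand side depends only on coefficients of the argument of total degree at most $N$. An induction on the total degree then pins down all coefficients, hence the solution is unique.

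I expect the only delicate point, and the place where a hasty argument could go wrong, to be precisely the interplay between the specialization $x_d=0$ and the integro-differential operators, namely the two observations of the second paragraph; everything else is routine. If one prefers a self-contained combinatorial argument, one notes instead that $\left.\GFNdk\right|_{x_d=0}$ retains exactly the monomials coming from trees $\N$ with $\wi_d(\N)=1$, which by Proposition~\ref{prop_relation_sizes} means that no non-root vertex of $\N$ has a direction containing $d$; deleting the $d$-th coordinate of every label (which equals $1$ at the root and $\bullet$ everywhere else) is then a bijection from these trees onto $\NATdk[d-1]^*$ that preserves $\wi_i$ for all $i\le d-1$, and the only thing needing care is that conditions~\ref{gnat_growth} and~\ref{gnat_distinct} of Definition~\ref{def_gnat} transfer across the correspondence, which is immediate since the $d$-th coordinate is non-trivial only at the root.
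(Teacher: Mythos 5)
Your proof is correct. The paper omits the proof of this proposition entirely (it is an extended abstract), so there is nothing to compare against, but your argument via the fixed-point equation of Proposition~\ref{prop_equ_diff_natdk} is sound: the two key observations --- that $\int_{\dir}g$ is divisible by $x_d$ whenever $d\in\dir$, and that $\int_{\dir}$ commutes with the specialization $x_d=0$ whenever $d\notin\dir$ --- are both valid, the identification of the $(d,k)$-directions avoiding $d$ with $\edir[d-1][k]$ is immediate, and your uniqueness argument (constant term $1$, each $\int_{\dir}$ raises total degree by $k\ge 1$, induction on degree) correctly pins down the solution. The alternative combinatorial argument you sketch is also right, and is arguably the more direct one: by Proposition~\ref{prop_relation_sizes}, the condition $\wi_d(\N)=1$ is equivalent to the absence of vertices whose direction contains $d$, and erasing the $d$-th coordinate (which is $\bullet$ off the root and $1$ at the root) is a size-preserving bijection onto $(d-1,k)$-dimensional non-ambiguous trees; the interval condition for $i=d$ is vacuously the statement $\wi_d=1$, and the remaining conditions are untouched. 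Either argument alone would suffice.
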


For some specific values of $d$ and $k$ we have (at least partial) results.
\begin{prop}
\label{prop_sol_d_d-1}
Let $k=d-1$, if we know a particular solution $s( x_1, \ldots, x_d)$ for
$$
\partial_1 \ldots \partial_d \LGFNdk[d][d-1]
=
\partial_{1} \LGFNdk[d][d-1] \times \ldots \times \partial_{d} \LGFNdk[d][d-1] 
$$
then, for any function $s_1(x_1), \ldots, s_d(x_d)$, the function
$s( s_1(x_1), \ldots, s_d(x_d))$ is also a solution.
\end{prop}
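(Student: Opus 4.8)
The statement is that if $s(x_1,\dots,x_d)$ solves the PDE
$$
\partial_1\cdots\partial_d\, s = \partial_1 s \cdot \partial_2 s \cdots \partial_d s,
$$
then the ``separably precomposed'' function $\tilde s(x_1,\dots,x_d) := s(s_1(x_1),\dots,s_d(x_d))$ is again a solution, for arbitrary univariate functions $s_i$. The natural approach is a direct change-of-variables computation using the chain rule, exploiting the crucial fact that each $s_i$ depends on exactly one variable. Write $u_i := s_i(x_i)$, so $\tilde s(x_1,\dots,x_d) = s(u_1,\dots,u_d)$, and note $\partial_{x_i} u_j = 0$ for $i\neq j$ while $\partial_{x_i} u_i = s_i'(x_i)$.

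**Key steps.** First I would compute a single partial derivative: by the chain rule,
$$
\partial_{x_i}\tilde s = (\partial_i s)(u_1,\dots,u_d)\cdot s_i'(x_i).
$$
The point is that the ``correction factor'' $s_i'(x_i)$ depends only on $x_i$, so when I next apply $\partial_{x_j}$ for $j\neq i$ it passes through that factor untouched and only hits the first factor, again producing a chain-rule term with its own factor $s_j'(x_j)$. Iterating, I would show by an easy induction on subsets $I\subseteq\{1,\dots,d\}$ that
$$
\partial_I \tilde s \;=\; \Bigl(\textstyle\prod_{i\in I} s_i'(x_i)\Bigr)\cdot (\partial_I s)(u_1,\dots,u_d),
$$
where $\partial_I$ denotes the composition of $\partial_{x_i}$ over $i\in I$ and $\partial_I s$ the analogous mixed partial of $s$. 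Applying this with $I=\{1,\dots,d\}$ gives
$$
\partial_1\cdots\partial_d\,\tilde s = \Bigl(\textstyle\prod_{i=1}^d s_i'(x_i)\Bigr)\cdot (\partial_1\cdots\partial_d s)(u_1,\dots,u_d).
$$
On the other hand, applying the single-variable formula for $\partial_{x_i}\tilde s$ and multiplying over $i$ yields
$$
\prod_{i=1}^d \partial_{x_i}\tilde s = \Bigl(\textstyle\prod_{i=1}^d s_i'(x_i)\Bigr)\cdot \prod_{i=1}^d (\partial_i s)(u_1,\dots,u_d).
$$
Now I invoke the hypothesis that $s$ solves the PDE, evaluated at the point $(u_1,\dots,u_d)$: the two right-hand sides are equal, hence $\partial_1\cdots\partial_d\,\tilde s = \prod_i \partial_{x_i}\tilde s$, which is exactly the PDE for $\tilde s$. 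This completes the argument.

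**Main obstacle.** There is essentially no deep obstacle here — the statement is a structural observation about the shape of this particular PDE (only ``pure'' mixed partials appear, one per variable, and the right side factors variable-by-variable), so it is stable under separable precomposition. The only point requiring care is the bookkeeping in the inductive step: one must check that when $\partial_{x_j}$ acts on $\partial_I\tilde s$ with $j\notin I$, it does not see the monomial $\prod_{i\in I}s_i'(x_i)$ because none of those factors involves $x_j$, and that the chain rule applied to $(\partial_I s)(u_1,\dots,u_d)$ produces precisely $(\partial_{I\cup\{j\}}s)(u_1,\dots,u_d)\cdot s_j'(x_j)$ since only $u_j$ among the arguments depends on $x_j$. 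Both are immediate from $\partial_{x_j}u_\ell=\delta_{j\ell}s_j'(x_j)$. One should also remark that if one wants the conclusion to respect boundary/initial conditions (so that $\tilde s$ is again a generating function of the right combinatorial type), the $s_i$ must be chosen compatibly; but as a statement purely about solutions of the differential equation, no such restriction is needed.
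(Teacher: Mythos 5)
Your proof is correct. The paper, being an extended abstract, omits the proof of this proposition entirely, but the direct chain-rule computation you give is evidently the intended argument: since each $s_i$ depends on a single variable, the full mixed partial $\partial_1\cdots\partial_d$ of the composite picks up the factor $\prod_i s_i'(x_i)$ exactly once, and the same factor appears on the right-hand side as the product of the $d$ single-derivative corrections, so the two sides match after evaluating the original equation at $(s_1(x_1),\dots,s_d(x_d))$. Your closing remark that this is a statement about solutions of the PDE only, and that combinatorial initial conditions would further constrain the $s_i$, is also apt.
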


\begin{prop}
\label{prop_sol_d_1}
Some non trivial rational functions are solutions of
$
\partial_1 \ldots \partial_d \LGFNdk[d][1]
=
\prod_{\dir \in \edir[d][d-1]}
	\partial_{\dir} \LGFNdk[d][1]
.
$
\end{prop}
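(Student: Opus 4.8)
The plan is to produce explicit rational solutions by hand, using a degeneracy of the equation that is present precisely when $d\ge 3$. First I would write the equation out in coordinates: since $\edir[d][d-1]$ is the set of the $d$ subsets $D_i:=\{1,\dots,d\}\setminus\{i\}$, the operator $\partial_\dir$ for $\dir=D_i$ is $\prod_{j\neq i}\partial_{x_j}$, so the equation reads
\[
\partial_1\cdots\partial_d\,\LGFNdk[d][1]=\prod_{i=1}^{d}\Big(\prod_{j\neq i}\partial_{x_j}\Big)\LGFNdk[d][1],
\]
a product of $d$ differential operators each of order $d-1$. For $d=2$ this is the equation of Proposition~\ref{prop_equ_diff_nat} for $\LGFN$, whose general solution is $-\log(f(x_1)+g(x_2))$ (set $w=e^{-u}$; then $\partial_1\partial_2 w=0$), which is never rational in a non-degenerate way; so the statement really concerns $d\ge 3$, a restriction I would state explicitly.

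The key observation is that for $d\ge 3$ every factor on the right-hand side is a derivative of order $d-1\ge 2$, hence can be made to vanish identically \emph{without} forcing $u$ to be independent of any single variable. Indeed, if $u$ satisfies $\partial_{x_2}\partial_{x_3}\cdots\partial_{x_d}u\equiv 0$, i.e. $\big(\prod_{j\neq 1}\partial_{x_j}\big)u\equiv 0$, then the right-hand side vanishes because it has this quantity as a factor, while the left-hand side is $\partial_{x_1}\big(\prod_{j\neq 1}\partial_{x_j}u\big)=0$ as well; thus $u$ solves the equation. The condition $\partial_{x_2}\cdots\partial_{x_d}u\equiv 0$ holds for any finite sum $u=\sum_{j=2}^{d}v_j$ in which the summand $v_j$ does not involve the variable $x_j$ (each $v_j$ being allowed to depend on $x_1$).

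Then I would exhibit a concrete non-trivial example. For every $d\ge 3$ the rational function
\[
u=\frac{1}{1-x_1-x_3-x_4-\cdots-x_d}+\frac{1}{1-x_1-x_2}
\]
has first summand independent of $x_2$ and second summand independent of $x_3,\dots,x_d$, hence $\partial_{x_2}\partial_{x_3}\cdots\partial_{x_d}u\equiv 0$ and, by the previous paragraph, $u$ is a solution; yet $u$ is a genuine (non-polynomial) rational function that depends on all of $x_1,\dots,x_d$. I would also remark that the same mechanism gives polynomial solutions — every polynomial of total degree $\le d-2$ is a solution, since then $\partial_1\cdots\partial_d u=0$ and each factor $\big(\prod_{j\neq i}\partial_{x_j}\big)u$ is $d-1$ distinct derivatives of a polynomial of degree $\le d-2$, hence $0$ — and that all these solutions are distinct from the combinatorial series $\LGFNdk[d][1]$ itself, for which $\big(\prod_{j\neq i}\partial_{x_j}\big)\LGFNdk[d][1]=1+\int_{x_i}\GFNdk[d][1]\neq 0$.

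There is no genuine obstacle here; the only point requiring care is the word ``non-trivial'', i.e. checking that the displayed $u$ really involves all $d$ variables and is not merely a pull-back of a lower-dimensional solution, and noting that $d\ge 3$ is essential — for $d=2$ forcing a factor $\partial_{x_i}u$ to vanish deletes $x_i$ from $u$, and indeed the $d=2$ equation has no non-trivial rational solution. If one insisted instead on \emph{fully coupled} rational solutions (all factors nonzero), a short degree/order count shows that a pure power ansatz $u=(1-x_1-\cdots-x_d)^{-m}$ forces $m=-\tfrac{d(d-2)}{d-1}\notin\mathbb{Z}$ for $d\ge 3$; this is why the decoupled family above is the natural place to look.
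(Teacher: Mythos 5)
Your computation is correct as far as it goes: for $d\ge3$, any $u$ with $\left(\prod_{j\ne 1}\partial_{x_j}\right)u\equiv 0$ kills one factor of the right-hand side, and since the left-hand side is $\partial_{x_1}$ applied to that same factor it vanishes too; your displayed $u$ is indeed such a rational function involving all $d$ variables. But this is a genuinely different route from the paper's, and it arguably proves a different statement. The paper sets $\LGFN_{(i)}:=\partial_{\dir}\LGFNdk[d][1]$ with $\dir=\i{1}{d}\setminus\{i\}$, rewrites the equation as $\partial_i\LGFN_{(i)}=\prod_{j=1}^d\LGFN_{(j)}$, multiplies these $d$ identical identities to get $\prod_{i}\partial_i\LGFN_{(i)}=\prod_{i}\LGFN_{(i)}^d$, and then extracts particular solutions by matching factors, i.e.\ by solving the decoupled first-order equations $\partial_i\LGFN_{(i)}=\LGFN_{(i)}^d$. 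The solutions obtained that way have every factor $\LGFN_{(i)}$ not identically zero --- which is the situation of the actual generating function, since $\partial_{\dir}\LGFNdk[d][1]=1+\int_{x_i}\GFNdk[d][1]$ has constant term $1$ --- and the method makes sense at $d=2$, which is exactly the case combined with Proposition~\ref{prop_sol_d_d-1} in the subsequent proposition to conclude $\GFNdk[2][1]=\GFN$.

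Your solutions, by contrast, are precisely those for which the right-hand side vanishes identically: they satisfy the \emph{linear} equation $\partial_{\dir}u=0$ for a single $\dir\in\edir[d][d-1]$, which implies the nonlinear equation for free and would do so for any equation of this product form, so the nonlinearity is never engaged. In the sense most consistent with how the proposition is used, these are the ``trivial'' solutions that the adjective ``non trivial'' is meant to exclude; moreover your construction gives nothing at $d=2$, the one case the paper exploits downstream. So while the verification itself is sound, I would not accept this as a proof of the intended statement without either an explicit agreement on what ``non trivial'' means, or an upgrade of the construction to solutions with all $\partial_{\dir}u\not\equiv0$ --- which, as your own power-ansatz computation ($m=d(d-2)/(d-1)\notin\mathbb{Z}$ for $d\ge3$) already suggests, is where the real content and difficulty of the proposition lie.
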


\begin{proof}[(sketch)]
We  define
$
\LGFN_{(i)} = \partial_{\dir} \LGFNdk[d][1]
$
where 
$i \in \i{1}{d}$
and
$\dir = \i{1}{d} \setminus \{i\}$.
We get the relation
$
\partial_i \LGFN_{(i)}
=
\prod_{j=1}^d \LGFN_{(j)}
$
and then
$
\prod_{i=1}^d
\partial_i \LGFN_{(i)}
=
\prod_{i=1}^d \LGFN_{(i)}^d
.
$
To obtain a particular solution, we just need to identify, in the previous 
equation, the term $\partial_i \LGFN_{(i)}$ to the term $\LGFN_{(i)}^d.$
We thus obtain some non trivial solutions for our equation,
which are rational functions.
\end{proof}

Since dimension $(2,1)$ is the unique case where Proposition~\ref{prop_sol_d_d-1}
and Proposition~\ref{prop_sol_d_1} can be applied at the same time,
and the computation of $\GFNdk[\d][\d]$ is straightforward,
we have the following proposition.
\begin{prop}
We have the closed formulas:
$\GFNdk[2][1] = \GFN$ and
$\GFNdk[d][d]=\sum_{n \ge 0} \frac{(x_1\cdot \ldots \cdot x_d)^n}{(n!)^d}.$
\end{prop}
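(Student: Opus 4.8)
The plan is to establish the two formulas independently; in both cases the argument is a direct unfolding of Definition~\ref{def_gnat} in a degenerate range of the parameters. For $\GFNdk[2][1]=\GFN$, I would first note that there are exactly two $(2,1)$-directions, $\{1\}$ and $\{2\}$, so a $\binom{2}{1}$-ary tree is precisely a binary tree, with the child of direction $\{1\}$ as the left child and the child of direction $\{2\}$ as the right child. Unwinding the labelling conditions, a non-root vertex of direction $\{1\}$ (resp.\ $\{2\}$) carries a $(2,1)$-tuple $(a,\bullet)$ (resp.\ $(\bullet,b)$), i.e.\ a single integer, and the root carries a pair of integers, which is exactly the ``root counted as both a left and a right child'' convention of Section~\ref{definitions}; Condition~\ref{gnat_growth} becomes the strict monotonicity of left (resp.\ right) labels along ancestor chains, and Condition~\ref{gnat_distinct} says that the left (resp.\ right) labels together with the root's form an initial interval $\{1,\dots,M\}$ without repetition. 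Since the root dominates every vertex by Condition~\ref{gnat_growth}, its two components are the two maxima, so the non-root left (resp.\ right) vertices receive exactly the labels $\{1,\dots,|\LV|\}$ (resp.\ $\{1,\dots,|\RV|\}$); this is Definition~\ref{def_nat}, and Proposition~\ref{prop_relation_sizes} gives $\wi_1-1=|\LV|$ and $\wi_2-1=|\RV|$, so after the substitution $x_1\mapsto x$, $x_2\mapsto y$ the monomial weight in Equation~\eqref{equ_gfndk} becomes $x^{|\LV|}y^{|\RV|}/(|\LV|!\,|\RV|!)$ and the two generating series agree. A shorter route I would also record: at $(d,k)=(2,1)$ Proposition~\ref{prop_equ_diff_natdk} is verbatim the fixed-point equation $\GFN=(1+\int_x\GFN)(1+\int_y\GFN)$ of Proposition~\ref{prop_equ_diff_nat}, and this equation determines its solution recursively by total degree once the constant term $1$ is fixed, so the two series coincide.

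For $\GFNdk[d][d]$, the starting observation is that $\edir[d][d]$ has a single element, the full set $\{1,\dots,d\}$, so every vertex of a $\binom{d}{d}$-ary tree has at most one child; consequently the shape of a non-ambiguous tree of dimension $(d,d)$ on $n$ vertices is forced to be the path on $n$ vertices. Every label is then a full $d$-tuple of integers, so for each coordinate $i$ Condition~\ref{gnat_growth} forces the sequence of $i$-th components read from the root downwards to be strictly decreasing, while Condition~\ref{gnat_distinct} forces these $n$ distinct integers to be exactly $\{1,\dots,n\}$. A strictly decreasing listing of $\{1,\dots,n\}$ is $n,n-1,\dots,1$, and this happens independently in each of the $d$ coordinates, so there is a \emph{unique} such tree for each $n\ge 1$: the path whose $j$-th vertex (with $j=0$ at the root) bears the label $(n-j,\dots,n-j)$. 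Its root is labelled $(n,\dots,n)$, so by Definition~\ref{gnat_gs} its geometric size is $(n,\dots,n)$, and its contribution to $\GFNdk[d][d]$ in Equation~\eqref{equ_gfndk} is $\prod_{i=1}^{d} x_i^{\,n-1}/(n-1)! = (x_1\cdots x_d)^{n-1}/((n-1)!)^d$. Summing over $n\ge 1$ and re-indexing by $m=n-1$ gives $\GFNdk[d][d]=\sum_{m\ge 0}(x_1\cdots x_d)^m/(m!)^d$. As a consistency check I would verify that this series solves the specialisation of Proposition~\ref{prop_equ_diff_natdk}, which at $(d,d)$ collapses to $\GFNdk[d][d]=1+\int_{\{1,\dots,d\}}\GFNdk[d][d]$: each of the $d$ integrations sends $x_i^m$ to $x_i^{m+1}/(m+1)$, so the right-hand side reproduces the left-hand side.

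I do not anticipate a genuine difficulty here; the whole statement is bookkeeping with the definitions, which is exactly why the text calls it ``straightforward''. The one place that deserves care is the rigidity step in the $(d,d)$ case: one must check that the interval-plus-distinctness requirement of Condition~\ref{gnat_distinct}, combined with the strict monotonicity of Condition~\ref{gnat_growth}, leaves no freedom at all --- in particular that all root coordinates equal $n$ --- so that each size contributes a single term. Once this is pinned down, the enumeration in the $(d,d)$ case and the identification of definitions in the $(2,1)$ case are immediate.
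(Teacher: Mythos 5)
Your proof is correct, and it is worth noting how it relates to what the paper (an extended abstract with this proof omitted) actually has in mind. For the $(d,d)$ case you do exactly the ``straightforward computation'' alluded to in the text: since $\edir[d][d]$ is a singleton, the shape is a path, and Conditions~\ref{gnat_growth} and~\ref{gnat_distinct} of Definition~\ref{def_gnat} rigidify the labelling so that each size $n$ contributes the single monomial $(x_1\cdots x_d)^{n-1}/((n-1)!)^d$ in~\eqref{equ_gfndk}; your explicit check that the root label is forced to be $(n,\dots,n)$ is precisely the point that needs to be said, and your verification against the $(d,d)$ specialisation of Proposition~\ref{prop_equ_diff_natdk} is a nice sanity check. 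For the $(2,1)$ case, however, the sentence preceding the proposition suggests the authors intend to recover $\GFNdk[2][1]$ analytically, by solving the differential equation through the combination of Proposition~\ref{prop_sol_d_d-1} (composition with univariate reparametrisations) and Proposition~\ref{prop_sol_d_1} (particular rational solutions), dimension $(2,1)$ being the only case where both apply; you instead unwind Definition~\ref{def_gnat} to identify \nat[2][1]s with the labelled binary trees of Definition~\ref{def_nat} (using Proposition~\ref{prop_relation_sizes} to match the exponents $\wi_i-1$ with $|\LV|$ and $|\RV|$), and you also record the shorter argument that the fixed-point equation of Proposition~\ref{prop_equ_diff_natdk} at $(2,1)$ coincides with that of Proposition~\ref{prop_equ_diff_nat} and determines the series degree by degree. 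Both of your routes are valid and arguably more elementary than the hinted one: the combinatorial identification makes $\GFNdk[2][1]=\GFN$ true essentially by construction (the paper itself says the original case is dimension $(2,1)$), while the paper's intended derivation has the different virtue of exhibiting $(2,1)$ as the solvable instance of the general differential-equation machinery of Section~\ref{gnat}.
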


\noindent We see $\GFNdk[d][d]$ as is a kind of generalized Bessel
function because
$\GFNdk[2][2](x/2, -x/2) = J_0(x)$ where $J_\alpha$ is the classical Bessel 
function.
This supports our feeling that the general case leads to serious difficulties.

\subsection{Geometric interpretation} 
\label{geom_gnat}

As for non-ambiguous trees, we can give a geometric definition
of non-ambiguous trees of dimensions $(d,k)$ as follows.
We denote by $(e_1,\dots,e_d)$ the canonical basis of $\mathbb{R}^\d$ and
$(\X_1,\ldots,\X_\d)$ its dual basis, i.e. $\X_i$ is $\mathbb{R}$-linear
$\X_i(e_i)=\delta_{i, j}$.
Let $P\in\mathbb{R}^\d$ and $\dir=\{i_1,\ldots,i_k\}$ a $(d,k)$-direction,
we call \emph{cone of origin $P$ and direction $\dir$} the set of points
$C(P, \pi) := \{P + a_1e_{i_1} + \cdots + a_ke_{i_k}\ |\ (a_1,\ldots,a_k)\in\mathbb{N}^k\}.$
\begin{defi}\label{def_gnat_geo}
A \emph{geometric non-ambiguous tree} of dimension $(\d,\k)$ and box $\gs$
is a non empty set $\V$ of points of $\mathbb{N}^\d$ such that:
\begin{enumerate}
\item\label{gnat_geo_box} $\i{1}{\wi_1}\times\cdots\times\i{1}{\wi_\d}$ is the smallest box containing $\V$,
\item\label{gnat_geo_root} $\V$ contains the point $(\wi_1,\ldots,\wi_\d)$, which is called the \emph{root},
\item\label{gnat_geo_cone} For $P\in \V$ different from the root, there exists a unique
$(d,k)$-direction $\dir=\{i_1,\ldots,i_k\}$ such that the cone $c(P, \dir)$
 contains at least one point different
from $P$. We say that $P$ is of type $\dir$.
\item\label{gnat_geo_affine} For $P$ and $P'$ two points of $\V$ belonging
to a same affine space of direction $\Vect(e_{i_1},\ldots,e_{i_k})$,
then, either $\forall j\in\i{1}{\k}\text{, }X_{i_j}(P)>X_{i_j}(P')$, or
$\forall j\in\i{1}{\k}\text{, }X_{i_j}(P')>X_{i_j}(P).$
\item\label{gnat_geo_distinct_and_interval} For each $i\in\i{1}{\d}$,
for all $l\in\i{1}{\wi_i-1}$, the affine hyperplane $\{x_i=l\}$
contains exactly one point of type $\dir$ such that $i\in\dir$.
\end{enumerate}
\end{defi}

\begin{prop}
There is a simple bijection between the set of geometric non-ambiguous tree of
box $\gs$ and the set of non-ambiguous tree of
geometric size $\gs$.
\end{prop}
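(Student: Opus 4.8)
The plan is to construct an explicit bijection between geometric non-ambiguous trees of box $\gs$ and (combinatorial) non-ambiguous trees of geometric size $\gs$, following the same pattern that links Definition~\ref{def_nat} and Definition~\ref{def_gnat_geo} in the classical case $(2,1)$. First I would show how to read a $\binom{d}{k}$-ary tree structure off a geometric object $\V$: take the root $(\wi_1,\dots,\wi_\d)$, and for each other point $P\in\V$, condition~\ref{gnat_geo_cone} assigns it a unique type $\dir$, so $P$ becomes a child of index $\dir$ of some already-determined vertex. The parent of $P$ should be the point $P'\in\V$ that ``dominates'' $P$ in the cone sense and is closest; one makes this precise by observing that the points sharing a common affine subspace of direction $\Vect(e_{i_1},\dots,e_{i_k})$ through $P$ are totally ordered componentwise by condition~\ref{gnat_geo_affine}, so there is a well-defined successor. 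Iterating gives an ordered tree; I would check that each vertex gets at most one child per direction (this is where conditions~\ref{gnat_geo_distinct_and_interval} and~\ref{gnat_geo_affine} combine), so we indeed land in a $\binom{d}{k}$-ary tree.

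Next I would define the labelling: the child $P$ of type $\dir=\{i_1,\dots,i_k\}$ receives the $(d,k)$-tuple whose $i_j$-th entry is $X_{i_j}(P)$ and whose other entries are $\bullet$; the root gets $(\wi_1,\dots,\wi_\d)$. Then I must verify the three axioms of Definition~\ref{def_gnat}: axiom~\ref{gnat_dk_tuple} is immediate from the construction; axiom~\ref{gnat_growth} (strict decrease of matching components along descents) follows because descending an edge moves strictly toward the origin in the coordinates of the edge's direction, and combining edges of different directions still only decreases each coordinate that ever appears; axiom~\ref{gnat_distinct} (distinctness of the $i$-th components and the interval-starting-at-$1$ property) is exactly the translation of condition~\ref{gnat_geo_distinct_and_interval} together with~\ref{gnat_geo_box}. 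One also checks that the geometric size of the resulting \nat, i.e.\ the root label, is $\gs$, which is condition~\ref{gnat_geo_root}.

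For the inverse map, given a \nat $\N$ I would place each vertex in $\mathbb{N}^d$ by recovering its coordinates from its own tuple together with the tuples of its ancestors: a vertex $V$ gets coordinate $i$ equal to the $i$-th entry of the nearest ancestor (including $V$ itself) whose tuple has a non-$\bullet$ value in position $i$; this is well-defined because the root is a $(d,d)$-tuple so such an ancestor always exists. One then checks that this placement lands in $\mathbb{N}^d$, that the set of points satisfies~\ref{gnat_geo_box}--\ref{gnat_geo_distinct_and_interval}, and that the two constructions are mutually inverse — the cone/successor relation on the point set recreates exactly the parent relation of $\N$ because axiom~\ref{gnat_growth} guarantees the ancestor lies in the correct cone and axioms~\ref{gnat_distinct} guarantee uniqueness, so no spurious points interfere.

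The main obstacle is the well-definedness of the parent (successor) in direction $\dir$: one must argue that for a point $P$ of type $\dir$ there is a \emph{unique} point $P'\in\V$ with $P'$ strictly above $P$ in all coordinates $i_1,\dots,i_k$, agreeing with $P$ in the remaining coordinates, and minimal with this property — and symmetrically that on the \nat side the ancestor picking out coordinate $i$ is the unique one consistent with the tree edges. The subtlety is that a priori several points could sit in the same affine subspace and be incomparable, or a point could fail to have any dominator; ruling these out requires carefully using condition~\ref{gnat_geo_affine} (to get the total order) and condition~\ref{gnat_geo_distinct_and_interval} together with the ``smallest box'' condition~\ref{gnat_geo_box} (to force existence, by a minimality/descent argument on the relevant coordinate). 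Once this local structure is nailed down, everything else is a routine unwinding of the definitions, and since the excerpt is an extended abstract I would present the bijection and these verifications in sketch form only.
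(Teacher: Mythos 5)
Your construction — reading the $\binom{d}{k}$-ary tree off the point set via the unique type from condition~\ref{gnat_geo_cone} and the successor in the componentwise order given by condition~\ref{gnat_geo_affine}, labelling children by their non-$\bullet$ coordinates, and inverting by propagating coordinates from the nearest ancestor carrying each index — is exactly the intended correspondence, and the verifications you flag (in particular uniqueness of the type and condition~\ref{gnat_geo_affine} on the inverse side, which follow from axioms~\ref{gnat_growth}--\ref{gnat_distinct} together with the fact that a vertex has at most one child per direction) do go through. The paper omits this proof entirely, so your sketch is a valid substitute taking the natural approach.
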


\acknowledgements
The authors thank Samanta Socci for fruitful discussions which were
the starting point of the generalisation of non-ambiguous trees.

This research was driven by computer exploration using the open-source software \texttt{Sage}~\cite{sage} and its algebraic combinatorics features developed by the \texttt{Sage-Combinat} community~\cite{Sage-Combinat}.

\nocite{*}
\bibliographystyle{alpha}
\bibliography{biblio}
\end{document}